\newcommand{\R}{\mathbb{R}}
\newcommand{\Z}{\mathbb{Z}}
\newcommand{\N}{\mathbb{N}}
\newcommand{\Q}{\mathbb{Q}}
\newcommand{\A}{\mathbb{A}}
\newcommand{\Sp}{\mathbb{S}}
\newcommand{\Di}{\mathbb{D}}
\newcommand{\M}{\mathbb{M}}
\newcommand{\I}{\mathbb{I}}
\newcommand{\T}{\mathbb{T}}
\newcommand{\croset}[1]{\left\lbrace #1 \right\rbrace}
\newcommand{\symp}[2]{\mathrm{Symp}^{#1}(#2)}
\newcommand{\sympo}[2]{\mathrm{Symp}_\diamond^{#1}(#2)}
\newcommand{\sympc}[2]{\mathrm{Symp}_\star^{#1}(#2)}
\newcommand{\isum}[2]{\displaystyle\sum_{#1}^{#2}}
\newcommand{\inint}[2]{\displaystyle\int_{#1}^{#2}}
\newcommand{\lconv}[2]{\underset{#1 \rightarrow #2}{\lim}}
\newcommand{\rconv}[2]{\underset{#1 \rightarrow #2}{\longrightarrow}}
\newcommand{\iconj}[2]{#1\circ #2 \circ #1^{-1}}
\newcommand{\lequiv}[2]{\underset{#1 \rightarrow #2}{\sim}}
\newcommand{\rest}[2]{{
  #1 
  |_{#2} 
  }}
\DeclareMathOperator*{\diam}{diam}
\DeclareMathOperator{\Leb}{Leb}
\DeclareMathOperator*{\id}{id}
\newcommand{\mesy}[1]{\Leb_{\T \times \croset{#1}}}
\theoremstyle{plain}
\newtheorem{theorem}{Theorem}[section]
\newtheorem{lemma}[theorem]{Lemma}
\newtheorem{prop}[theorem]{Proposition}
\crefname{prop}{Proposition}{Propositions}
\newtheorem{coro}[theorem]{Corollary}
\newtheorem{defin}[theorem]{Definition}
\newtheorem{remark}[theorem]{Remark}
\newtheorem{fact}[theorem]{Fact}
\newtheorem*{theorem-no}{Theorem}
\crefname{equation}{equation}{equations}
\crefname{fact}{Fact}{Facts}
\newtheorem{mainthm}{Theorem}
\crefname{mainthm}{Theorem}{Theorems}
\renewenvironment{theorem-no}[1][Theorem]{%
\vspace{10pt}
  \par\noindent\textbf{Theorem #1.}\enspace  
  \itshape
}{\par 
\vspace{10pt} \normalfont}
\theoremstyle{nonumberplain}
\newtheorem{proof}{Proof}
\numberwithin{equation}{section}
\renewenvironment{proof}[1][Proof]{%
\vspace{0.1cm}
  \par\noindent\textit{#1.}\enspace  
}{%
  \hfill\ensuremath{\square}\par  
  \vspace{0.2cm}
}
\title{Analytic pseudo-rotations displaying ergodicity or emergence on the sphere and disk}
\author{Yann Delaporte\footnote{IMJ-PRG, CNRS, Sorbonne University, Université Paris Cité, partially supported by the ERC project 818737 Emergence of wild differentiable dynamical systems.}}
\begin{document}

\maketitle
\date{}

\begin{abstract}

We construct analytic pseudo-rotations on the sphere and the disk with specific properties. We obtain analytic pseudo-rotations which are ergodic. Then, in opposition to ergodicity, we construct analytic pseudo-rotations which exhibit a maximum order of local emergence. To achieve this, we apply a principle introduced by Berger, based on the Approximation by Conjugacy method of Anosov-Katok. 

\end{abstract}

\tableofcontents

\section{Introduction}

In dynamical systems, one of the many challenges is to prove that properties can be satisfied by very regular dynamics. To this end, Anosov and Katok introduced the Approximation by Conjugacy (AbC) method \cite{anosov_new_1970}, which gives the existence of smooth, ergodic, and volume-preserving dynamics on a compact manifold endowed with a $\R/\Z$ action, such as the cylinder, the sphere or the disk. A particularity of the AbC method is that it produces \emph{pseudo-rotations} which are symplectomorphisms (area and orientation preserving maps) of surfaces with a finite number of periodic points (see \cite{beguin_pseudo-rotations_2006}).\\

This method has been highly effective in achieving smoothness for several properties, including minimal ergodicity, minimal set with a given measure, maximum local emergence, transitivity without ergodicity, among others, as presented in the survey \cite{fayad_constructions_2004} or in \cite{berger_analytic_2022}.\\

However it is not obvious how one can adapt the AbC method to the analytic setting. In \cite[§7.1]{fayad_constructions_2004} and \cite{herman_open_1998}, it is pointed out that the difficulties to obtain analyticity with the AbC method is due to the shrinking of the convergence radius when we compose conjugacies. Some analytic examples have been obtained on tori with \enquote{block-slide} map and Fourier series in \cite{banerjee_real-analytic_2019} or with a straightforward analytic construction in \cite[Theorem 2.1]{furstenberg_strict_1961}. Also, Fayad and Katok managed to apply the AbC method on odd spheres and obtained analytic uniquely ergodic volume preserving maps, see \cite{fayad_analytic_2014}.\\

Recently, the case of the cylinder has been solved in \cite{berger_analytic_2022} where Berger uses entire functions to face the issue of the convergence radius. His construction enabled to obtain   entire pseudo-rotations, each of which leaves  invariant a cylinder with a non-straight boundary. Then the boundary is  easily straightened by using an analytic conjugacy. As a result, Berger disproved a conjecture of Birkhoff \cite{birkhoff_unsolved_1941}
by showing the existence of analytic pseudo-rotations of the cylinder which are ergodic. In addition, he shows to the contrary the existence of analytic pseudo-rotations which exhibit local emergence of maximal order (see definition Page \pageref{def:emer}).\\

Later on, to address the question of the sphere and the disk, Berger introduced the \emph{AbC Principle}, which allows to perform analytic construction with the AbC method on surfaces such as the disk, the cylinder and the sphere \cite{berger_analytic_2024}. Basically, the Principle states that if a property $(\mathcal{P})$ is realizable by an \emph{AbC scheme} in a $C^r$-topology, for $0 \leq r \leq \infty$, then there exists an analytic symplectomorphism satisfying $(\mathcal{P})$. See \cref{def:abc} for the AbC scheme's definition. To prove the Principle, Berger uses deformation of the analytic and symplectic structure of these surfaces during the construction of the symplectomorphism. This work enabled him to obtain analytic pseudo-rotations which are transitive on the sphere and the disk.
\\

As problematized in \cite{berger_analytic_2024}, it is also possible to obtain ergodicity or to the contrary maximum local emergence with his principle, which are the two main results.

\begin{mainthm}\label{thm:erg_scheme}
There exist analytic ergodic pseudo-rotations on the sphere and the disk.
\end{mainthm}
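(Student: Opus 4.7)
The overall plan is to reduce \cref{thm:erg_scheme} to a smooth construction via Berger's AbC Principle: it suffices to exhibit an AbC scheme (in the sense of \cref{def:abc}) whose $C^r$-limit on $\Sp^2$ (respectively the disk $\Di$) is an ergodic pseudo-rotation, for then the Principle automatically yields an analytic symplectomorphism with the same property. The task thus reduces to adapting the classical Anosov--Katok construction to the formal framework required by the Principle.

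I would proceed as follows. Enumerate a countable basis $(B_i)_{i \in \N}$ of open sets of the ambient surface. Build inductively rationals $\alpha_n = p_n/q_n$ converging to an irrational $\alpha$ of Liouville type, together with smooth symplectomorphisms $h_n$ that commute with the finite-order rotation $R_{\alpha_{n-1}}$. Set $H_n = h_1 \circ \cdots \circ h_n$ and $f_n = H_n \circ R_{\alpha_n} \circ H_n^{-1}$. Because $h_n$ commutes with $R_{\alpha_{n-1}}$, one has $f_n = f_{n-1}$ when $\alpha_n = \alpha_{n-1}$, so by choosing $|\alpha_n - \alpha_{n-1}|$ small enough relative to $\|H_n\|_{C^r}$ the sequence $(f_n)$ converges in $C^r$ to a pseudo-rotation $f$ of rotation number $\alpha$. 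At each step $n$, the conjugacy $h_n$ must be chosen so as to spread a fundamental domain of $R_{\alpha_{n-1}}$ densely over the surface; more precisely, so that for all $i, j \leq n$ there is an integer $0 \leq k < q_n$ for which $f_n^k(B_i) \cap B_j$ captures a definite proportion of the area. A standard Borel--Cantelli argument then forces ergodicity of the $C^0$-limit $f$ with respect to the area form.

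Once such a scheme is constructed, I would verify that it satisfies the hypotheses of \cref{def:abc} and invoke the AbC Principle of \cite{berger_analytic_2024} to upgrade $f$ to an analytic ergodic pseudo-rotation on $\Sp^2$ or $\Di$. The main obstacle lies in the joint constraints placed on $h_n$: it must commute with the rational rotation $R_{\alpha_{n-1}}$ (to preserve the AbC consistency between $f_n$ and $f_{n-1}$) while remaining flexible enough to realize the ergodicity-producing transport prescribed in the previous paragraph; in the disk case one must additionally ensure that the construction respects the boundary. Once this compatibility is settled, convergence and ergodicity follow from the well-established Anosov--Katok argument, and the genuinely new input is the analytic realization provided by the Principle.
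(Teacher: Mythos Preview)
Your overall plan—build a $C^r$-AbC scheme whose limits are ergodic, combine with \cref{prop:pseudo_rot} to secure the pseudo-rotation property, then invoke the AbC Principle—matches the paper exactly. The gap is in the mechanism that forces ergodicity.

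The condition you impose at step $n$, that for each pair $i,j \le n$ there exist \emph{some} $0 \le k < q_n$ with $\Leb\bigl(f_n^k(B_i) \cap B_j\bigr)$ bounded below, is a transitivity-type hitting condition: it says nothing about the \emph{frequency} of visits, and no Borel--Cantelli argument will extract ergodicity from it. (Indeed Berger's first application of the Principle in \cite{berger_analytic_2024} is transitivity, obtained from precisely this kind of open-set hitting.) For ergodicity you would need, at minimum, that for most $x$ the proportion of $k < q_n$ with $f_n^k(x) \in B_j$ is close to $\Leb(B_j)$, which is a much stronger demand on $h_n$ than you have stated. The paper instead works with empirical measures in the Kantorovich distance: it builds the scheme on the cylinder $\A$ (pushing forward to $\Sp$ or $\Di$ via the symplectomorphism $\pi$ of \eqref{eq:pi}, which also dissolves your boundary worry since ergodicity is invariant under area-preserving conjugacy) and chooses $h_n$ via \cref{lemma:herg} so that $(h_n)_*\Leb_{\T\times\{y\}}$ is $\epsilon_n$-close to $\Leb_\A$ for all $y$ outside a set of length $\epsilon_n$. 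Since $e^{f_n}_{q_n}(x)$ is the $(h_n)_*$-image of an equispaced $q_n$-sample on the horizontal circle through $h_n^{-1}(x)$, this gives $d_K\bigl(e^{f_n}_{q_n}(x),\Leb_\A\bigr) \le \epsilon_n$ on a set of nearly full measure; a telescoping estimate (\cref{prop:scheme_erg}) then yields $e^f_{q_n}(x) \to \Leb_\A$ for a.e.\ $x$, which is ergodicity.
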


Previously, Gerber \cite{gerber_conditional_1985} proved the existence of analytic symplectomorphisms of these surfaces which are ergodic and with positive metric entropy (and so with infinitely many periodic points). Here, this theorem answer positively to the first part of a question from \cite[Question 5]{fayad_questions_2019} by constructing analytic ergodic pseudo-rotations on the disk.

\begin{mainthm}\label{thm:scheme_emer}
There exist analytic pseudo-rotations on the sphere and the disk with maximal order of local emergence.
\end{mainthm}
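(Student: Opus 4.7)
The plan is to deduce \cref{thm:scheme_emer} from the AbC Principle recalled in the paragraph before \cref{thm:erg_scheme}: it is enough to produce an AbC scheme, in some $C^r$-topology with $0\le r\le\infty$, whose limit pseudo-rotation has local emergence of maximal order. Analyticity and the correct symplectic structure on the sphere and the disk will then come for free. Recall that local emergence measures, via a Wasserstein covering number, how many distinct empirical measures $\frac{1}{n}\sum_{k=0}^{n-1}\delta_{f^k(x)}$ arise as $x$ varies; on a surface the maximal order is doubly-exponential in $1/\varepsilon$, and the goal is to saturate this bound.

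The core construction is inductive, following the AbC template. I would build sequences $\alpha_n=p_n/q_n$ of rationals and $R_{\alpha_n}$-equivariant smooth symplectomorphisms $h_n$ so that, setting $H_n = H_{n-1}\circ h_n$ and $f_n = H_n\circ R_{\alpha_{n+1}}\circ H_n^{-1}$, the sequence $f_n$ converges in $C^r$ to a pseudo-rotation $f_\infty$. The emergence-producing ingredient, borrowed from Berger's cylinder construction in \cite{berger_analytic_2022}, is the choice of $h_n$ on each cell $C$ of a fundamental domain for $R_{\alpha_n}$: one scrambles the orbit inside $C$ so that, at a chosen scale $\varepsilon_n$, the empirical distribution up to time $q_{n+1}$ depends strongly on the starting point, exhibiting inside $C$ an exponentially large family of measures pairwise $\varepsilon_n$-far apart in Wasserstein distance. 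Summing over cells and telescoping in $n$ yields the doubly-exponential lower bound on the covering number. Choosing $\alpha_{n+1}$ close enough to $\alpha_n$ secures the $C^r$-convergence, and continuity of the Wasserstein distance under $C^0$ perturbations guarantees that the distinguishable families constructed at finite stages persist in the limit $f_\infty$.

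Once the scheme is in place, one checks that its data (the $H_n$, $\alpha_n$ and their tolerance parameters) fit the framework of \cref{def:abc}, so that the AbC Principle delivers an analytic pseudo-rotation on the sphere and the disk with the same emergence property. The main obstacle I anticipate is the quantitative calibration of $h_n$: to realize the \emph{maximal} order one needs a very large oscillation of $h_n$ on each cell, forcing exponentially many distinguishable empirical measures, while simultaneously keeping a strong $C^r$-estimate in order to ensure convergence of the AbC scheme. Balancing these two competing demands through a careful choice of the cell sizes, of the tolerances $\varepsilon_n$, and of the denominators $q_n$ is the delicate step; the emergence construction on the cylinder in \cite{berger_analytic_2022} serves as the blueprint, and the task is to adapt it to the $\R/\Z$-equivariant setting on the sphere and disk covered by Berger's AbC Principle.
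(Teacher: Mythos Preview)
Your high-level plan matches the paper's: show that maximal local emergence is realized by a $C^r$-AbC scheme, then invoke the AbC Principle together with \cref{prop:pseudo_rot}. However, you misidentify the main technical obstacle, and this gap is exactly where the paper's work lies.

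You anticipate the difficulty as balancing ``very large oscillation of $h_n$'' against $C^r$-convergence. In the paper the situation is the opposite: the emergence-producing conjugacy $g_0$ from \cite{berger_analytic_2022} is taken $C^0$-\emph{close to the identity} (\cref{lemma:em_ber}, item i)). The emergence does not come from large displacements but from the separation of the pushforwards ${g_0}_*\Leb_{\T\times\{y\}}$ as $y$ varies, quantified by a bound of the form $\Leb_\I\{y':d_K({g_0}_*\mesy{y},{g_0}_*\mesy{y'})\le 3\eta_0\}\le\exp(-\eta_0^{-2+\epsilon_0})$.

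The genuine obstacle, which you do not address, is that the order of local emergence is only invariant under \emph{bi-Lipschitz} conjugacy (this forces $r\ge 1$; see \cref{prop:inv_emer}), whereas the projection $\pi:\check{\A}\to\check{\M}$ has derivatives that blow up near $\partial\A$. Hence, unlike the ergodicity case, one cannot simply push the cylinder scheme forward by $\pi$ and declare the property preserved. The paper's resolution (\cref{coro:symp_emer} and the scheme in \cref{sec:scheme_em}) hinges precisely on the $C^0$-smallness of $g_0$: this keeps all the relevant circles inside a compact $\A_{\epsilon}$ on which $\pi$ \emph{is} $Q$-bi-Lipschitz, and the constant $Q$ is absorbed into the emergence scale via $\eta=\eta_0/Q$. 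The scheme then tracks emergence through the auxiliary function $\eta(h,\epsilon')$ of \eqref{eq:eta_eps} and controls $d_{C^0}(h_{n+1},h_n)$ only on the shrinking compacta $\M_{\epsilon_n\delta_n}$. Without this idea, the ``adaptation to the sphere and disk'' you allude to in your last sentence does not go through.
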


To obtain these results, we prove in \cref{sec:erg,sec:emer} that these properties are realizable through AbC schemes. These schemes and their proofs involve only $C^0$ and $C^1$ topologies, but the Principle implies that these properties are then satisfied by an analytic pseudo-rotation.\\

In this article, we start in \cref{sec:abc_scheme} by presenting AbC schemes which enable to build symplectomorphisms satisfying desired properties. These AbC schemes are defined in a $C^r$-topology for $0\leq r \leq +\infty$, in particular, when we show that such a scheme realizes a property, we only need to work in the $C^r$-topology. Then, we will recall the AbC Principle in \cref{thm:abc_princ} which provides analytic symplectomorphims satisfying a property realized by an AbC scheme. In particular, we can ask this analytic symplectomorphism to be a pseudo-rotation by \cref{prop:pseudo_rot}. Therefore, working in the $C^0$ and $C^1$ topologies will provide the analytic pseudo-rotations for \cref{thm:erg_scheme,thm:scheme_emer} thanks to the Principle.\\

Then, in \cref{sec:erg,sec:emer}, we prove that ergodicity or emergence are realizable by AbC schemes. In the Approximation by Conjugacy method we proceed by successive conjugations. The first step of the proofs is to construct symplectomorphisms conjugated to a rotation which satisfies the property up to a precision epsilon.

 In \cref{sec:erg}, we prove the $C^0$-AbC realization of ergodicity. First, we set up the construction of ergodicity on the cylinder from \cite{berger_analytic_2022} into a $C^0$-AbC scheme. Then, as ergodicity is a property invariant by area preserving conjugacy, we can push forward this AbC scheme on the sphere and the disk by a symplectomorphism from the interior of the cylinder onto a subset of full measure of the surface. Therefore we obtain the $C^0$-AbC realization of ergodicity on the sphere and the disk and we can use the AbC Principle to obtain \cref{thm:erg_scheme}.

Finally, in \cref{sec:emer}, we obtain the $C^1$-AbC realization of emergence. However, we can not proceed as easily as above because the emergence order is invariant by area preserving conjugacies which are \emph{bi-Lipschitz}, while the above projections from the cylinder to the disk or the sphere are \emph{not} bi-Lipschitz. To tackle this issue, we use the fact that \cite{berger_analytic_2022} constructed a sequence of conjugacies which are $C^0$-close to the identity. They conjugate rotations to dynamics which leave  invariant large compact subsets of the cylinder  on which the emergence is large. As   the projection is bi-Lipschitz on these compact subsets,  the push-forward of these conjugacies  and dynamics to the sphere and the disk displays 
high order of local emergence at infinitely small scale; so does their limit. We formalize this as a $C^1$-AbC scheme and then  we apply the AbC Principle to obtain \cref{thm:scheme_emer}. \\

\textbf{Acknowledgement.} {\it I am very thankful to Pierre Berger who introduced me to the Anosov Katok method through several papers, especially his own about the AbC Principle which motivated this article. I also thanks him for the supervision on this work and the several re-readings of this paper. Moreover, I would like to thanks Raphaël Krikorian for his encouragements on my work.}

\addcontentsline{toc}{section}{\bfseries{Notation}}
\section*{Notation}

\paragraph{Surfaces and spaces}\ \\

In this paper we will work on the following surfaces:

\begin{enumerate}
\item$\A := \T \times \I$ the cylinder, where $\T$ is the torus $\R / \Z$ and $\I = [-1;1]$.
\item $\Di := \lbrace x \in \R^2 ; x_1^2 + x_2^2 \leq 1 \rbrace$ the unit closed disk.
\item$\Sp := \lbrace x \in \R^3, x_1^2 +x_2^2 +x_3^2 =1 \rbrace$ the sphere. 
\end{enumerate}

The circle $\T$ acts canonically by rotation on the disk and the cylinder , it also acts by rotation of axis $x_3$ on the sphere. We denote by $R_\alpha$ such a rotation for $\alpha \in \T$.\\

We consider $\M \in \lbrace \A; \Sp; \Di \rbrace$ endowed with its canonical symplectic form $\Omega$. The form $\Omega$ defines a probability measure on the surface denoted $\Leb$. For $1 \leq r \leq +\infty$, we denote $\symp{r}{\M}$ the space of $C^r$-symplectomorphisms endowed with the $C^r$ topology. For $r=0$, $\symp{0}{\M}$ denotes the space of symplectic homeomorphisms, defined as the closure of $\symp{1}{\M}$ for the $C^0$ topology; in dimension 2 it coincides with the space of area-preserving homeomorphisms \cite[Theorem I]{oh_c0-coerciveness_2006}. Let $\symp{\omega}{\M}$ denote the set of analytic symplectomorphisms of $\M$.\\
When $0 \leq r < +\infty$, we denote by $d_{C^r}$ the distance inducing the $C^r$ topology.

We will work on these symplectic surfaces:

\begin{enumerate}
\item $\check{\A} := \T \times (-1;1)$,
\item $\check{\Di} := \croset{x \in \Di \ : \ 0< x^2_1 + x_2^2  <1}$,
\item$\check{\Sp} := \croset{x \in \Sp \ : \ x \neq (0,0,\pm 1)}$. 
\end{enumerate}

We may observe that these surfaces are symplectomorphic via

\begin{equation}\label{eq:pi}
\begin{array}{rrcl}
\pi : &\check{\A} &\rightarrow &\check{\M}\\
&(\theta,y) &\mapsto &\left\lbrace
\begin{array}{cl}
(\theta,y) &\text{if } \M = \A\\
\left( \sqrt{1-y^2}\cos(2\pi \theta) , \sqrt{1-y^2}\sin (2\pi \theta ),y \right) &\text{if } \M = \Sp\\
\sqrt{\frac{1+y}{2}} \left( \cos (2\pi \theta) , \sin(2\pi\theta) \right) &\text{if } \M = \Di
\end{array}\right.
\end{array}.
\end{equation}

We use the same notation $\pi$ for the three surfaces $\M \in \croset{\A; \Sp ; \Di}$. Note that $\pi$ might be extended to a surjective continuous map from $\A$ to $\M$.\\

Then, for $r \geq 0$, we can now define the following subspaces of $\symp{r}{\M}$:
\begin{itemize}
\item the subspace formed by the elements compactly supported in $\check{\M}$:
$$\sympc{r}{\M} = \croset{f \in \symp{r}{\M} \, : \, \overline{\mathrm{supp}(f)} \subset \check{\M}},$$
\item the subspace formed by elements equal to the identity on $\M \setminus \check{\M}$:
$$\sympo{r}{\M} =  \croset{f \in \symp{r}{\M} \, : \, \rest{f}{\M\setminus \check{\M}} = \id }.$$
\end{itemize}

We consider for $\eta \in [0;1)$ the surfaces
$$\A_\eta = \T \times \I_\eta \; \text{ and } \; \M_\eta = \pi(\A_\eta),$$
where $\I_\eta = (-1+\eta ; 1-\eta)$. In particular $\check{\M} = \M_0$, and for $f \in \sympc{r}{\M}$ we have some $\eta >0$ such that the support of $f$ is included in $\M_\eta$.

\paragraph{Kantorovich-Wasserstein distance}\ \\

We will work with the space of probability measures on $\M$, denoted by $\mathcal{M}(\M)$, endowed with the Kantorovich-Wassertein distance:
$$\forall \mu_1, \mu_2 \in \mathcal{M}(\M), \; d_K(\mu_1,\mu_2) := \inf \croset{\inint{\M^2}{} d(x_1,x_2) d\mu \ : \ \mu \in \mathcal{M}(\M^2) \text{ s.t. } {p_i}_*\mu = \mu_i \, \forall i \in \croset{1;2}}$$
 
where for $i \in \croset{1;2}$, $p_i : (x_1,x_2) \in \M^2 \mapsto x_i \in \M$ is the canonical projection. This distance induces the weak $\star$-topology on $\mathcal{M}(\M)$, which is compact. Moreover, by the duality theorem of Kantorovich and Rubinstein, we have the dual formulation:
$$d_K(\mu_1,\mu_2) = \sup \croset{\inint{\M}{}f d(\mu_1 - \mu_2) \; : \; f: \M \rightarrow \R \text{ 1-lipschitz}}.$$

We will give more properties on this distance in \cref{an:kanto}.\\
Given a symplectomorphism $f$ of $\M$, we consider the empirical measures for $x$ in $\M$ and $n \geq 1$:
$$e^f_n (x) := \frac{1}{n} \isum{k=1}{n}\delta_{f^k(x)}.$$

\paragraph{Emergence}\ \\

By Birkhoff's ergodic theorem, the following function is measurable and defined $\Leb$ almost everywhere on $\M$:
$$e^f : x \in \M \mapsto \lconv{n}{\infty} e^f_n(x) \in \mathcal{M}(\M).$$
It is called the \emph{empirical function} of $f$. Note that $e^f(x)$ describes the statistical behaviour of the orbit of $x$.\\

The \textit{ergodic decomposition} of $f$ is defined as the pushforward $\hat{e} := e_*^f \Leb$; it describes the statistical behaviour of orbits of $f$. This is a probability measure on the space of measures $\mathcal{M}(\M)$. Together with the Kantorovich distance, it allows to quantify a size of the space of invariant measures of $f$. This size called \textit{the order of local emergence} of $f$ is defined by: \label{def:emer}
$$\overline{O}\mathcal{E}_{loc}(f) = \limsup_{\epsilon \rightarrow 0} \int_{\mathcal{M}(\M)} \frac{\log \lvert \log \hat{e} \left( B(\mu,\epsilon) \right)\rvert}{\lvert \log \epsilon \rvert} d\hat{e}(\mu).$$
By \cite[Proposition 1.16]{helfter_scales_2024}, the order of local emergence is at most $2$ on a surface. We say that a symplectomorphism on $\M$ has \textit{maximal order of local emergence} if
$$\overline{O}\mathcal{E}_{loc}(f) = 2.$$

For more details about emergence, one can refer to \cite[§4]{berger_complexities_2019} for definitions. Several results on emergence are also obtained in \cite{berger_emergence_2021} and in \cite{berger_emergence_2017}.

\section{AbC Principle}\label{sec:abc_scheme}

In this section we present the AbC principle introduced by Berger in \cite{berger_analytic_2024}.\\

First we define what an AbC scheme is, where AbC stands for Approximation by Conjugacy. Let us denote by $\mathcal{T}^r$ the pull-back by the restriction $\rest{\cdot}{\check{\M}} : \symp{r}{\M} \mapsto \symp{r}{\check{\M}}$ of the compact-open $C^r$-topology on $\symp{r}{\check{\M}}$, for $\M \in \croset{\A ; \Sp; \Di}$ and $0\leq r \leq +\infty$. Where a basis of neighbourhoods of $f \in \symp{r}{\M}$ is:
$$\croset{g \in \symp{r}{\M} \, : \, d_{C^s} (\rest{f}{\M_\eta}, \rest{g}{\M_\eta}) < \epsilon },$$
taken among $\eta,\epsilon >0$ and finite $s \leq r$.

\begin{defin}[AbC scheme]\label{def:abc}
A \underline{$C^r$-AbC scheme} is a map:
$$(h,\alpha) \in \sympo{r}{\M} \times \Q/\Z \mapsto \left (U(h,\alpha), \nu(h,\alpha)\right ) \in \mathcal{T}^r \times (0,\infty)$$
such that:
\begin{enumerate}[label = \alph*)]
\item Each open set $U(h,\alpha)$ contains a map $\hat{h}\in \sympo{r}{\M}$  satisfying
$$ \iconj{\hat{h}}{R_\alpha} = \iconj{h}{R_\alpha}.$$

\item  Given any sequences $(h_n)_n \in \sympo{r}{\M}^\N$ and $(\alpha_n)_n \in \left (\Q / \Z \right )^\N$ satisfying :
$$h_0 = id \, , \;\; \alpha_0 = 0,$$
and for $n \geq 0$:
$$ \iconj{h_{n+1}}{R_{\alpha_n}} = \iconj{h_n}{R_{\alpha_n}},$$
$$h_{n+1} \in U(h_n,\alpha_n ) \, \text{ and } \, 0< \lvert \alpha_{n+1} - \alpha_n \rvert < \nu(h_{n+1}, \alpha_n )$$
then the sequence $f_n := \iconj{h_n}{R_{\alpha_n}}$ converges to a map $f$ in $\symp{r}{\M}$.
\end{enumerate}
The map $f$ is said \underline{constructed from the $C^r$-scheme} $(U,\nu)$.
\end{defin}
The aim of such AbC schemes is to construct maps which satisfy some property $(\mathcal{P})$:

\begin{defin}
A property ($\mathcal{P}$) on $\symp{r}{\M}$ is an \underline{AbC realizable $C^r$-property} if:
\begin{enumerate}
\item The property ($\mathcal{P}$) is invariant by $\symp{r}{\M}$-conjugacy: if $f \in \symp{r}{\M}$ satisfies ($\mathcal{P}$) then so does $\iconj{h}{f}$ for any $h \in \symp{r}{\M}$.
\item There exists a $C^r$-AbC scheme such that any map $f\in \symp{r}{\M}$ constructed from it satisfies ($\mathcal{P}$).
\end{enumerate}
\end{defin}

For instance, Berger proved that transitivity is an AbC $C^0$-realizable property. In Sections \ref{sec:erg} and \ref{sec:emer}, we will show that the following properties are realizable, as problematized by Berger in \cite[Problem 2.5]{berger_analytic_2024}.

\begin{prop}\label{prop:real_erg}
Being ergodic is an AbC realizable $C^0$-property.
\end{prop}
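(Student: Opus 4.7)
The plan is to verify the two conditions defining AbC realizability. The first (conjugacy invariance) is immediate: every element of $\symp{0}{\M}$ is by definition an area-preserving homeomorphism, and ergodicity is preserved under measure-preserving bijections. The core of the proof is therefore the construction of a $C^0$-AbC scheme whose limit maps are all ergodic. I would carry this out in two stages: first on the cylinder $\A$ by recasting the $C^0$ Anosov-Katok construction from \cite{berger_analytic_2022} into the formalism of \cref{def:abc}, then transfer it to $\M \in \croset{\Sp; \Di}$ through the symplectomorphism $\pi : \check{\A} \to \check{\M}$ of \cref{eq:pi}.

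For the scheme on the cylinder, fix a refining sequence $(\mathcal{V}_n)_n$ of finite open covers of $\A$ whose mesh tends to $0$. Given $(h, p/q) \in \sympo{0}{\A} \times \Q/\Z$, I would design $U(h, p/q)$ and $\nu(h, p/q)$ so that any admissible sequence $(h_n, \alpha_n)$ produces at stage $n$ a map $f_n = \iconj{h_n}{R_{\alpha_n}}$ whose orbits visit every element of $\mathcal{V}_n$ up to precision $2^{-n}$. Axiom (a) is realized by choosing $\hat h = h \circ \phi$, where $\phi \in \sympo{0}{\A}$ commutes with $R_{p/q}$; explicitly, $\phi$ is built in the standard Anosov-Katok way as a map supported in a fundamental domain $\T \times [i/q, (i+1)/q]$ and extended $1/q$-equivariantly, designed to shuffle the cells of $\mathcal{V}_n$ under vertical translation. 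The parameter $\nu(h, p/q)$ is then chosen small enough that for $|\alpha_{n+1} - \alpha_n| < \nu(h_{n+1}, \alpha_n)$ one has $d_{C^0}(\iconj{h_{n+1}}{R_{\alpha_{n+1}}}, \iconj{h_{n+1}}{R_{\alpha_n}}) < 2^{-n}$, which both gives the Cauchy property and preserves the $\epsilon_n$-equidistribution witnessed at stage $n$. A Borel-Cantelli style argument applied to the events \enquote{the orbit of $V$ meets $V'$} for $V, V' \in \mathcal{V}_n$ then shows that the $C^0$-limit $f$ is ergodic.

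To transfer the scheme to $\M \in \croset{\Sp; \Di}$, introduce the push-forward operator $\Phi : \sympo{0}{\A} \to \sympo{0}{\M}$ defined by $\Phi(h) = \pi \circ h \circ \pi^{-1}$ on $\check{\M}$ and $\Phi(h) = \id$ on $\M \setminus \check{\M}$. This is well-defined because elements of $\sympo{0}{\A}$ are the identity on $\T \times \croset{-1;1}$ and have support in some $\A_\eta$ with $\eta > 0$, so $\pi$-conjugation produces a symplectic homeomorphism extending continuously by the identity. The map $\Phi$ is a bijection onto $\sympo{0}{\M}$, a homeomorphism for the topologies $\mathcal{T}^0$ (defined by restriction to compacts of $\check{\cdot}$, on which $\pi$ is a diffeomorphism), and it intertwines $R_\alpha$-conjugation on both sides. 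The push-forward scheme $(\tilde U(\Phi(h), \alpha), \tilde \nu(\Phi(h), \alpha)) := (\Phi(U(h, \alpha)), \nu(h, \alpha))$ then satisfies both axioms of \cref{def:abc} on $\M$, and the resulting limit $\Phi(f)$ is ergodic because $\pi$ is measure-preserving and $\M \setminus \check{\M}$ has zero Lebesgue measure.

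The main obstacle is the first stage: extracting from the $C^0$-construction of ergodicity a full set-valued scheme $(U, \nu)$ rather than one specific sequence of conjugacies. The set $U(h, p/q)$ must be an honest $\mathcal{T}^0$-neighborhood of $h$, yet tight enough that any $h_{n+1} \in U(h_n, \alpha_n)$ still produces a map with the required equidistribution on scale $\mathrm{mesh}(\mathcal{V}_n)$ — and this property must then be robust under the perturbation $\alpha_n \rightsquigarrow \alpha_{n+1}$. A secondary technical point is arranging the supports of the $\phi_n$ on the cylinder to lie in some $\A_{\eta_n}$ with $\eta_n \to 0$ chosen slowly, so that their $\pi$-push-forwards behave uniformly near the poles of $\Sp$ or the boundary of $\Di$, and the resulting sequence remains Cauchy in $\sympo{0}{\M}$.
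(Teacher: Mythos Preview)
There are two genuine gaps. First, your construction of $\phi$ has the geometry backwards: $R_{p/q}$ acts on $\A = \T \times \I$ by translation in the $\T$-factor, so a fundamental domain for $\langle R_{1/q}\rangle$ is a \emph{vertical} strip $[0,1/q) \times \I$, not a horizontal strip $\T \times [i/q,(i+1)/q]$, and the objects one must equidistribute are the horizontal circles $\T\times\{y\}$ (the $R_\alpha$-orbits), not anything moved by \enquote{vertical translation}. The correct building block, which the paper imports as \cref{lemma:herg}, is a compactly supported $\widetilde h\in\sympc{\infty}{\A}$ commuting with $R_{1/q}$ such that $\widetilde h_*\Leb_{\T\times\{y\}}$ is Kantorovich-close to $\Leb_\A$ for every $y\in\I_\epsilon$; one then sets $\hat h = h\circ\widetilde h$ and takes $U(h,\alpha)$ to be a $\mathcal T^0$-neighbourhood of $\hat h$ on which this closeness persists (this is exactly how the paper resolves the \enquote{main obstacle} you identify).

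Second, and more seriously, your ergodicity argument is really a transitivity argument. \enquote{Orbits visit every element of $\mathcal V_n$} together with a Borel--Cantelli argument on the events \enquote{the orbit of $V$ meets $V'$} yields at best a dense orbit; ergodicity requires the quantitative statement that time averages converge to the space average $\Leb$-a.e. The paper's scheme enforces this directly in the Kantorovich distance: it arranges that for every $x$ in a set of measure $\geq 1-\epsilon_n$ the empirical measure $e^{f_n}(x)=e^{f_n}_{q_n}(x)$ is $\epsilon_n$-close to $\Leb_\A$, and that these estimates are stable under all later perturbations, so that $e^f_{q_n}(x)\to\Leb_\A$ for a.e.\ $x$. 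Your transfer to $\M$ via $\Phi$ is essentially correct and matches the paper's reduction to the cylinder, except that a general $h\in\sympo{0}{\A}$ need not be compactly supported in $\check\A$; the continuous extension of $\Phi(h)$ to the poles holds instead because $\pi$ collapses each boundary circle to a single point and $h$ fixes that circle pointwise.
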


\begin{prop}\label{prop:real_emer}
Having maximal order of local emergence is an AbC realizable $C^1$-property.
\end{prop}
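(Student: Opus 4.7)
The overall plan is to lift Berger's cylinder construction from \cite{berger_analytic_2022} to $\check{\M}$ via the map $\pi$, exploiting the fact that $\pi$ is bi-Lipschitz on each compact subset of $\check{\A}$. I would start by verifying the conjugacy invariance demanded by the definition of AbC realizability. For $f,h \in \symp{1}{\M}$, the map $h$ is bi-Lipschitz and measure-preserving, and a direct computation gives $e^{\iconj{h}{f}}(h(x)) = h_*\, e^f(x)$; consequently $\widehat{e^{\iconj{h}{f}}}$ is the pushforward of $\widehat{e^f}$ by the operator $h_* : \mathcal{M}(\M) \to \mathcal{M}(\M)$, which is $L$-bi-Lipschitz for $d_K$ when $h$ is $L$-bi-Lipschitz. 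The inequalities
$$
\widehat{e^f}(B(\mu,\epsilon/L)) \; \leq \; \widehat{e^{\iconj{h}{f}}}(B(h_*\mu,\epsilon)) \; \leq \; \widehat{e^f}(B(\mu,L\epsilon)),
$$
fed into the definition of $\overline{O}\mathcal{E}_{loc}$ together with the change of variable $\nu = h_*\mu$, show that the $\limsup$ is unchanged.

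To build the scheme, given $(h,\alpha) \in \sympo{1}{\M} \times \Q/\Z$ with an associated scale parameter $\epsilon = \epsilon(h,\alpha) > 0$ (chosen so that $\epsilon_n \to 0$ along the iteration), I would fix a compact $K = K(h,\alpha) \subset \check{\A}$ on which $\pi$ is bi-Lipschitz with controlled constants. Berger's cylinder construction then supplies a conjugacy $\tilde h \in \sympc{1}{\A}$ supported in $K$ and a rational $\tilde\alpha$ close to $\alpha$ such that $\iconj{\tilde h}{R_{\tilde\alpha}}$ admits many disjoint invariant disks inside $K$ whose ergodic averages form, in $\mathcal{M}(\check{\A})$, an $\epsilon$-separated family large enough that the integrand of $\overline{O}\mathcal{E}_{loc}$ is $\geq 2-o(1)$ at scale $\epsilon$. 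Defining $\hat h \in \sympo{1}{\M}$ by setting $\hat h = \pi \circ \tilde h \circ \pi^{-1}$ on $\check{\M}$ and extending by the identity, the map $\iconj{\hat h}{R_{\tilde\alpha}}$ inherits the invariant disks in $\pi(K)$, and the bi-Lipschitz control of $\pi|_K$ transports the emergence witness to $\M$ at a scale comparable to $\epsilon$. The open set $U(h,\alpha)$ is then a neighbourhood of $\hat h$ in $\mathcal{T}^1$ small enough to preserve this witness and to force the $C^1$-Cauchy property of the sequence $f_n = \iconj{h_n}{R_{\alpha_n}}$, while $\nu(h,\alpha)$ is chosen so that $|\alpha_{n+1} - \alpha_n| < \nu$ makes $\iconj{h_{n+1}}{R_{\alpha_{n+1}}}$ a small $C^1$-perturbation of $\iconj{h_{n+1}}{R_{\alpha_n}} = \iconj{h_n}{R_{\alpha_n}}$.

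It remains to verify that the limit $f = \lim f_n$ has emergence $2$. The choices of $U$ and $\nu$ guarantee $f \in \symp{1}{\M}$. The emergence witness built at step $n$ persists in the limit because all subsequent conjugacies are either supported outside the critical disks or displace them in $d_K$ by less than $\epsilon_n$; hence $\widehat{e^f}$ is $\epsilon_n$-close to $\widehat{e^{f_n}}$ near those disks, and letting $\epsilon_n \to 0$ through the $\limsup$ defining $\overline{O}\mathcal{E}_{loc}(f)$ delivers the value $2$. The main obstacle throughout is the clash between the merely $C^0$-closeness of Berger's cylinder conjugacies and the $C^1$-topology required by the scheme; I would resolve it by selecting the compacts $K_n$ \emph{before} constructing the cylinder conjugacies, so that $\pi|_{K_n}$ has controlled derivatives and the $C^1$-size of $\hat h$ is dictated by the $C^1$-behaviour of $\tilde h$ on $K_n$ alone, independently of what $\tilde h$ does near $\partial \check{\A}$.
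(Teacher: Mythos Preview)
Your overall strategy---push Berger's cylinder emergence construction through $\pi$ and exploit that $\pi$ is bi-Lipschitz on compacts of $\check{\A}$---is the paper's strategy, and your conjugacy-invariance argument is essentially Proposition~\ref{prop:inv_emer}. But two parts of the sketch do not work as written.

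First, the emergence witness is not a family of ``invariant disks''. For a conjugated rational rotation $\iconj{\hat h}{R_{p/q}}$ every orbit is periodic, and the empirical measure at $\hat h(\theta,y)$ is a discrete approximation of $\hat h_*\mu_y$; in the limit with irrational $\alpha$ one gets exactly $e^f(x)=h_*\mu_y$ as in \eqref{eq:mes_emp}. The separation that must be established and propagated is a bound of the form
\[
\Leb_\I\{y'\in\I: d_K(h_*\mu_y,h_*\mu_{y'})\le\eta\}\le \exp(-\eta^{-2+\epsilon})
\]
for $y$ in a large set, not a statement about disjoint invariant sets. Consequently your persistence claim (``subsequent conjugacies are supported outside the critical disks or displace them by less than $\epsilon_n$'') has no content: every later $g_{m+1}$ moves all of $\check{\M}$. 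What makes the witness survive to the limit is that each $h_{m+1}$ is $C^0$-close to $h_m$ \emph{on the compact $\M_{\epsilon_m\delta_m}$ carrying the relevant $\mu_y,\mu_{y'}$}, so that $d_K(h_*\mu_y,{h_n}_*\mu_y)$ is controlled by a telescoping sum (Fact~\ref{fact:hn} and the proof of Fact~\ref{fact:emer}). The paper encodes this through the scale function $\eta(h,\epsilon')$ of \eqref{eq:eta_eps}; without an analogous device your argument has a genuine gap.

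Second, the ``main obstacle'' you name at the end is not the real one. Berger's $g_0$ lies in $\sympc{\infty}{\A}$ with support in a fixed compact of $\check{\A}$, so $h\circ(\pi\circ g_0\circ\pi^{-1})$ is automatically in $\sympo{1}{\M}$; there is no clash between $C^0$-closeness and the $C^1$-topology of the scheme. The actual obstacle (stated at the opening of \S\ref{sec:emer}) is that $\pi$ fails to be bi-Lipschitz near $\partial\A$, so pushing forward emergence naively degrades the scale uncontrollably. The cure is that $g_0$ can be chosen $C^0$-\emph{close to the identity}; this confines both $\mu_y$ and ${g_0}_*\mu_y$ to a region where $\pi$ has a fixed bi-Lipschitz constant $Q$, and the scale only drops by a factor $Q$ (Corollary~\ref{coro:symp_emer}). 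Your proposed fix---choosing $K_n$ to bound the $C^1$-size of $\hat h$---targets the wrong quantity. Finally, note that $\hat h$ must equal $h\circ g$ with $g$ commuting with $R_\alpha$ to satisfy Condition~a) of Definition~\ref{def:abc}; as written, $\hat h=\pi\circ\tilde h\circ\pi^{-1}$ omits the composition with $h$.
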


Yet, an AbC scheme also allows to construct pseudo-rotations, where a pseudo-rotations is a dynamic with a finite number of periodic points. In the cases of the cylinder, the disk or sphere this number must be respectively $0$, $1$, or $2$. In addition, these periodic points are all elliptic. 

\begin{prop}[\textnormal{\cite[Prop 2.4]{berger_analytic_2024}}]\label{prop:pseudo_rot}
For every $r \geq 1$, let $(\mathcal{Q})$ be the $C^r$-property of being a pseudo-rotation and let $(\mathcal{P})$ be any AbC realizable $C^r$-property. Then the conjunction $(\mathcal{P})\wedge (\mathcal{Q})$ is an AbC realizable $C^r$-property.
\end{prop}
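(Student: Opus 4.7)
The plan is to refine a given $C^r$-AbC scheme realizing $(\mathcal{P})$ into one whose every limit is also a pseudo-rotation. Conjugacy-invariance of $(\mathcal{P}) \wedge (\mathcal{Q})$ is immediate: $(\mathcal{P})$ is conjugacy-invariant by hypothesis, while symplectic conjugation bijects periodic-point sets period-by-period and preserves ellipticity, so $(\mathcal{Q})$ is conjugacy-invariant too.

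Let $(U_0,\nu_0)$ be a $C^r$-AbC scheme realizing $(\mathcal{P})$. I will define $(U,\nu)$ with $U(h,\alpha) \subseteq U_0(h,\alpha)$ and $\nu(h,\alpha) \leq \nu_0(h,\alpha)$, so that every admissible sequence for $(U,\nu)$ is also admissible for $(U_0,\nu_0)$ and hence yields a limit $f \in \symp{r}{\M}$ satisfying $(\mathcal{P})$. For condition (a) of \cref{def:abc}, I use the AbC property of $U_0$ to select a witness $\hat h \in U_0(h,\alpha)$ with $\iconj{\hat h}{R_\alpha} = \iconj{h}{R_\alpha}$, and set $U(h,\alpha)$ to be the intersection of $U_0(h,\alpha)$ with a small $\mathcal{T}^r$-neighborhood of $\hat h$.

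The core step is to impose further upper bounds on $\nu(h,\alpha)$ so that the limit $f$ is automatically a pseudo-rotation. Writing $\alpha_n=p_n/q_n$ in lowest terms, and noting that each $h_n \in \sympo{r}{\M}$ is the identity on $\M \setminus \check{\M}$, one has $\Fix(f_n^p) = h_n(\Fix(R_{p\alpha_n}))$, which for $1 \leq p < q_n$ reduces to the essential fixed set $E$ of $\M$ (empty on $\A$, the origin on $\Di$, the two poles on $\Sp$). I will require $\nu(h_{n+1},\alpha_n) < 1/q_n$, which forces $q_{n+1} > q_n$ and hence $\alpha_\infty$ to be irrational, and I will additionally require $\nu(h_{n+1},\alpha_n)$ small enough that, for every integer $p \leq q_n$ and every compact $K \subset \M \setminus E$ drawn from a fixed exhaustion, $f_{n+1}^p$ and $f_n^p$ remain $C^r$-close on $K$ with an error summable over $n$. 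A standard Anosov--Katok telescoping then confines $\Fix(f^p)$ to $E$ for every $p \geq 1$, and ellipticity of the points of $E$ follows because $f$ agrees with $\iconj{h_n}{R_{\alpha_n}}$ near them to ever higher order.

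The main obstacle is the quantitative calibration of $\nu(h,\alpha)$ in this last step: it must be small enough, in terms of $h$, $\alpha$, and the scale of the exhaustion of $\M \setminus E$, that summability of the $C^r$-errors survives all later conjugations by the $h_k$'s, whose local $C^r$-norms may blow up as their supports concentrate near $E$. The classical Anosov--Katok remedy, which I adopt, is to let $\nu(h,\alpha)$ decay faster than any tower built from $h$'s local $C^r$-norms on $\M_\eta$ for $\eta$ shrinking with $q$, which still leaves room for at least one rational $\alpha_{n+1}$ with denominator $> q_n$ to be admissible at every step.
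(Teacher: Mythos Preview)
The paper does not prove this proposition; it is quoted from \cite{berger_analytic_2024}. Your overall strategy---refining a given scheme $(U_0,\nu_0)$ by shrinking $\nu$ so that the limit is forced to be a pseudo-rotation---is the standard one and can be made to work, but several steps in your outline are incorrect as written.

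First, the bound $\nu(h_{n+1},\alpha_n)<1/q_n$ does not force $q_{n+1}>q_n$ (take $\alpha_n=1/3$, $\alpha_{n+1}=1/2$: the difference is $1/6<1/3$), and a strictly increasing sequence of denominators does not by itself make $\alpha_\infty$ irrational; you need a nested-interval or Liouville-type argument as in \cref{fact:alpha}. Second, the exclusion of periodic points outside $E$ needs more than mere summability of $d_{C^r}(f_{n+1}^p,f_n^p)$ on compacts: you must arrange that the tail $\sum_{m\ge n}d_{C^0}(f_{m+1}^p,f_m^p)$ is smaller than $\min_{x\in K}d(f_n^p(x),x)$, and since the latter tends to $0$ as $K$ exhausts $\M\setminus E$, one has to tie the compact $K_n$ to the step index, bound the $m$-th increment by a geometric fraction of the step-$m$ displacement on $K_m$, and argue inductively that the displacement on each fixed $K_N$ stays bounded below. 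Third, and most concretely wrong: the maps $h_n\in\sympo{r}{\M}$ are the identity only \emph{on} $\M\setminus\check{\M}$, not on a neighborhood of it, so $f$ does not ``agree with $f_n$ near $E$ to ever higher order''. Ellipticity follows instead from the $C^1$-convergence $f_n\to f$ on all of $\M$ guaranteed by condition~b) of the scheme: $Df(e)=\lim Df_n(e)$ has determinant $1$ and trace $\lim 2\cos(2\pi\alpha_n)=2\cos(2\pi\alpha)\in(-2,2)$, hence is conjugate to the irrational rotation $R_\alpha$. This also gives that each $e\in E$ is an isolated fixed point of every $f^p$, which is exactly what is needed to close the periodic-point argument near $E$.
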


Then, the use of open sets in the scheme allows to use density results, so that a realizable property is also realizable for stronger regularity as stated in the two next results.

\begin{prop}[\textnormal{\cite[Prop 2.6]{berger_analytic_2024}}]
For $+\infty \geq s \geq r \geq 0$, if $(\mathcal{P})$ is an AbC $C^r$-realizable property then it is an AbC $C^s$-realizable property.
\end{prop}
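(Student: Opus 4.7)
The plan is to construct a $C^s$-AbC scheme $(U',\nu')$ from the given $C^r$-AbC scheme $(U,\nu)$ by restricting the domain to the smaller space $\sympo{s}{\M}\times\Q/\Z$, intersecting each open set $U(h,\alpha)$ with an appropriately small $\mathcal{T}^s$-neighbourhood, and shrinking $\nu$. The mechanism is that $\mathcal{T}^s$ refines $\mathcal{T}^r$, so every $\mathcal{T}^r$-open set is $\mathcal{T}^s$-open, and any sequence $(h_n,\alpha_n)$ satisfying the refined scheme's rules automatically satisfies the original $C^r$-scheme's rules. Consequently $f_n:=\iconj{h_n}{R_{\alpha_n}}$ converges in $\symp{r}{\M}$ to a limit $f$ satisfying $(\mathcal{P})$; the extra smallness in $\nu'$ will upgrade this to convergence in $\symp{s}{\M}$. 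Invariance of $(\mathcal{P})$ under $\symp{s}{\M}$-conjugacy is free, since $\symp{s}{\M}\subset\symp{r}{\M}$.

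The crux is preserving condition (a) of \cref{def:abc}: given $(h,\alpha)\in\sympo{s}{\M}\times\Q/\Z$, one must exhibit $\hat{h}\in\sympo{s}{\M}$ inside the refined open set with $\iconj{\hat{h}}{R_\alpha}=\iconj{h}{R_\alpha}$. The $C^r$-scheme only supplies such an $\hat{h}$ in $\sympo{r}{\M}$, so writing $g:=h^{-1}\hat{h}$ and noting that the commutation is equivalent to $g$ centralizing $R_\alpha$, the task reduces to a density statement: compactly supported $R_\alpha$-equivariant $C^s$-symplectomorphisms of $\check{\M}$ are $\mathcal{T}^r$-dense in their $C^r$-analogues. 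Because $\alpha=p/q\in\Q/\Z$ is rational, $\langle R_\alpha\rangle$ is a finite group whose fixed points in $\M$ lie outside $\check{\M}$ (for the cylinder) or at isolated points avoided by compactly supported maps (for the disk and sphere). Equivariant symplectomorphisms therefore descend to the smooth quotient surface $\check{\M}/\langle R_\alpha\rangle$, where standard symplectic smoothing — for instance by writing the descended map as a time-one Hamiltonian flow, mollifying its Hamiltonian, and lifting back — yields a $C^s$-equivariant approximant $g_s$, and $\hat{h}:=hg_s$ is the desired map.

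With $\hat{h}$ chosen, set $U'(h,\alpha):=U(h,\alpha)\cap V(h,\alpha)$ where $V(h,\alpha)\in\mathcal{T}^s$ is a small neighbourhood of $\hat{h}$, and pick $\nu'(h,\alpha)\in(0,\nu(h,\alpha)]$ so small that $d_{C^s}(\iconj{h}{R_\beta},\iconj{h}{R_\alpha})$ is controlled summably (say, by $2^{-n}$ relative to the construction's step index) for $|\beta-\alpha|<\nu'(h,\alpha)$; this is achievable because conjugation by $h\in\sympo{s}{\M}$ is $\mathcal{T}^s$-continuous in the rotation parameter. Verification is then routine: condition (a) holds by construction; condition (b)'s convergence in $\symp{r}{\M}$ comes from the original scheme; and the telescoped bound makes $(f_n)$ Cauchy in $\mathcal{T}^s$, so its limit lies in $\symp{s}{\M}$ and inherits $(\mathcal{P})$.

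The main obstacle is the equivariant density step in the second paragraph: producing a $C^r$-to-$C^s$ symplectic approximation that simultaneously preserves $R_\alpha$-equivariance and compact support in $\check{\M}$. The isolated orbifold singularities of $\check{\M}/\langle R_\alpha\rangle$ at the fixed points of $R_\alpha$ are harmless because the supports stay away from them, and the approximation can be done by symmetric smoothing on a fundamental domain, but these details need to be handled with care.
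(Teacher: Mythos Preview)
The paper does not supply its own proof of this proposition: it is quoted verbatim from \cite[Prop.~2.6]{berger_analytic_2024} and left unproved here. So there is nothing in the present paper to compare your argument against.

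On its own merits, your outline is structurally sound and isolates the right difficulty. Two remarks.

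First, your summability clause for $\nu'$ is phrased in terms of ``the construction's step index $n$'', but $\nu'$ is a function of $(h,\alpha)$ alone, not of any step counter. The fix is the standard one: writing $\alpha=p/q$ in lowest terms, require $\nu'(h,\alpha)\le \min\bigl(\nu(h,\alpha),\tfrac{1}{2q^2}\bigr)$ and $d_{C^s}(\iconj{h}{R_\beta},\iconj{h}{R_\alpha})<1/q$ for $|\beta-\alpha|<\nu'(h,\alpha)$. The first bound forces $q_{n+1}>2q_n$ for any compatible sequence, so $\sum 1/q_n<\infty$ and $(f_n)$ is $C^s$-Cauchy; convergence in $\symp{r}{\M}$ and property $(\mathcal{P})$ are inherited from the original scheme exactly as you say.

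Second, you are right that the substantive step is the equivariant density: given $h\in\sympo{s}{\M}$ and the $C^r$-scheme's witness $\hat h_r\in U(h,\alpha)\cap\sympo{r}{\M}$, produce $\hat h_s\in U(h,\alpha)\cap\sympo{s}{\M}$ with the same conjugacy relation. Your reduction to approximating $g_r:=h^{-1}\hat h_r$ by an $R_\alpha$-commuting $g_s\in\sympo{s}{\M}$ in the $\mathcal{T}^r$ topology is correct, and passing to the free quotient $\check\M/\langle R_\alpha\rangle$ (which is again an open cylinder) is the natural route. Since $\mathcal{T}^r$ is only the compact-open $C^r$ topology on $\check\M$, you may first truncate $g_r$ to a compactly supported equivariant map agreeing with it on a large $\M_\eta$, and then smooth on the quotient away from the boundary; this sidesteps any orbifold issue entirely. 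That said, this is exactly the step whose details the cited reference presumably supplies, and which your sketch, as you acknowledge, leaves to be filled in.
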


In addition of this regularization result, Pierre Berger obtain in \cite[Theorem D]{berger_analytic_2024} that any realizable property can be achieved analytically, as stated in his following main theorem.

\begin{theorem}[AbC principle]\label{thm:abc_princ}
For every $0 \leq r \leq \infty$, any $AbC$ realizable $C^r$-property $(\mathcal{P})$ is satisfied by a certain $f \in \symp{\omega}{\M}$.
\end{theorem}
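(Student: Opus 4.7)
The plan is to start from an arbitrary AbC scheme $(U,\nu)$ realizing $(\mathcal{P})$ and to inductively build an admissible sequence $(h_n,\alpha_n)$ for this scheme for which the limit $f = \lim_n \iconj{h_n}{R_{\alpha_n}}$ lies in $\symp{\omega}{\M}$. Since $(\mathcal{P})$ is preserved by conjugacy and realized by any map produced by the scheme, obtaining analyticity of the limit will suffice. The first ingredient is a density statement: analytic symplectomorphisms supported in $\check{\M}$ are dense in $\sympo{r}{\M}$ for $\mathcal{T}^r$. This is standard for $r < \infty$ via mollification of Hamiltonians together with a symplectic projection, and reduces to the smooth case when $r = \infty$ via a double approximation argument.

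At each inductive step, suppose $h_n \in \sympo{\omega}{\M}$ and $\alpha_n \in \mathbb{Q}/\Z$ have been chosen and the scheme provides the open set $U(h_n,\alpha_n) \in \mathcal{T}^r$ together with the threshold $\nu(h_n,\alpha_n)>0$. By condition (a) of \cref{def:abc}, there exists $\hat{h} \in U(h_n,\alpha_n)$ with $\iconj{\hat{h}}{R_{\alpha_n}} = \iconj{h_n}{R_{\alpha_n}}$. The matching condition $\iconj{h_{n+1}}{R_{\alpha_n}} = \iconj{h_n}{R_{\alpha_n}}$ determines $h_{n+1}$ only modulo the centralizer of $R_{\alpha_n}$, which for a rational rotation is a large group containing many analytic elements. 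Combining this freedom with density, one finds $h_{n+1} \in \sympo{\omega}{\M}$ lying in $U(h_n,\alpha_n)$ and satisfying the required matching; then choose $\alpha_{n+1} \in \mathbb{Q}/\Z$ with $0 < |\alpha_{n+1}-\alpha_n| < \nu(h_{n+1},\alpha_n)$. By condition (b) of the scheme the sequence $f_n = \iconj{h_n}{R_{\alpha_n}}$ converges in $\symp{r}{\M}$ to a map $f$ satisfying $(\mathcal{P})$.

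The hard part is promoting this $C^r$-limit to an analytic limit, and this is where the classical obstruction identified by Fayad–Katok and Herman bites: complex conjugation by $h_n$ shrinks the strip of holomorphic extension of $R_{\alpha_n}$, so naively there is no reason for $f$ to extend holomorphically to any neighbourhood of $\M$. The strategy I would follow, inspired by the cylinder construction of \cite{berger_analytic_2022}, is to build $h_{n+1}$ not as a fixed analytic perturbation of $h_n$ but as the time-one map of a compactly supported analytic Hamiltonian with an \emph{entire} extension, while simultaneously deforming the analytic chart on $\check{\M}$ used to measure the complex radius of convergence. The entirety ensures that the composition $h_{n+1}\circ R_{\alpha_n}\circ h_{n+1}^{-1}$ extends to a complex neighbourhood of controllable size; the chart deformation compensates for the remaining contraction of the holomorphic extension domain. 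Choosing $\nu(h_{n+1},\alpha_n)$ small enough that $\alpha_{n+1}-\alpha_n$ is absorbed by this domain control, and paying attention to compact support so that the extension is well-defined across the boundary of $\check{\M}$ in $\M$ (where the analytic structure of $\Sp$ or $\Di$ differs from the cylindrical one used in intermediate steps), one obtains convergence of the $f_n$ in an analytic topology around each point of $\M$. The limit $f$ then belongs to $\symp{\omega}{\M}$ and, being a scheme-constructed map, satisfies $(\mathcal{P})$.
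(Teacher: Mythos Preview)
The paper does not prove this theorem: it is quoted verbatim as Theorem~D of \cite{berger_analytic_2024} and used as a black box. There is therefore nothing in the present paper to compare your argument against.

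Evaluated on its own, your sketch correctly locates the difficulty (the shrinking of the holomorphic extension under repeated conjugation) and names the right buzzwords (entire maps, deformed analytic charts), but it is not a proof and contains concrete errors. First, an analytic map that agrees with the identity on an open set is the identity everywhere, so ``analytic symplectomorphisms supported in $\check{\M}$'' and ``compactly supported analytic Hamiltonian with an entire extension'' are both impossible objects; your density and construction steps rely on these. Second, even granting analytic $h_{n+1}$ at each stage, nothing in your argument prevents the complex domain of $f_n$ from collapsing: the sentence ``choosing $\nu(h_{n+1},\alpha_n)$ small enough that $\alpha_{n+1}-\alpha_n$ is absorbed by this domain control'' is precisely the step that fails in the naive approach --- the domain loss comes from the conjugacy $h_{n+1}$, not from the rotation angle, and cannot be repaired by making $|\alpha_{n+1}-\alpha_n|$ small.

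The actual mechanism in \cite{berger_analytic_2024}, which you allude to but do not carry out, is to deform the analytic \emph{structure} on $\M$ along the construction, so that the limit $f$ is analytic for a limiting (non-standard) analytic structure; one then invokes uniformization to identify that structure with the standard one via some $\phi\in\symp{\omega}{\M}$, and finally uses the conjugacy-invariance built into the definition of an AbC-realizable property to conclude that $\phi\circ f\circ\phi^{-1}\in\symp{\omega}{\M}$ still satisfies $(\mathcal{P})$. Your outline gestures at the first of these steps and omits the last two entirely.
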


With this principle we can obtain through the realization of ergodicity and maximal order of local emergence our first main theorems.

\begin{proof}[Proof of Theorem \ref{thm:erg_scheme}]
It is a direct consequence of the latter AbC principle, \cref{prop:real_erg} on realization of ergodicity, and \cref{prop:pseudo_rot}.
\end{proof}

\begin{proof}[Proof of Theorem \ref{thm:scheme_emer}]
It is a direct consequence of the latter AbC principle, \cref{prop:real_emer} on the realization of maximal order of local emergence, and \cref{prop:pseudo_rot}.
\end{proof}

\section{Ergodicity}\label{sec:erg}
We are going to achieve ergodicity through an AbC scheme in this section, i.e. we aim to prove \cref{prop:real_erg} which states that being ergodic is an AbC realizable $C^0$-property.\\

First observe that $\check{\M}$ is symplectomorphic to $\check{\A}$ through the symplectomorphism $\pi$
and $\check{\M}$ differs from $\M$ by a negligible sets of point. Thus we just need to prove the AbC realization of the ergodicity on $\A$.\\

To obtain this result, we first recall a construction of symplectomorphisms of $\A$ which are compactly supported in $\check{\A}$ and send $\Leb_{\T\times \lbrace y \rbrace}$ to a measure close to $\Leb_\A$ for most $y$. Such will be useful to build the AbC scheme.\\

More precisely we invoke the following lemma from \cite[Lemma 2.2]{berger_analytic_2022}.

\begin{lemma}\label{lemma:herg}
For every $q \in \N^*$ and $\epsilon >0$, there exists $h \in \sympc{\infty}{\A}$ such that:
\begin{itemize}
\item the symplectomorphism $h$ commutes with the rotation $R_{\frac{1}{q}}$ ,
\item for every $(\theta , y) \in \A_\epsilon$, the measure $h_*\Leb_{\T\times \lbrace y \rbrace}$ is $\epsilon$-close to $\Leb_\A$.
\end{itemize}
\end{lemma}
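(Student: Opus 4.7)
Plan. The plan is to construct $h$ as a $C^\infty$ area-preserving diffeomorphism that, on $\A_\epsilon$, stretches each horizontal circle $\T \times \{y\}$ into a curve winding densely and uniformly through $\A$, while being the identity outside a compact subset of $\check{\A}$ and commuting with $R_{1/q}$ by construction. Concretely, fix a large integer $N \in q\N^*$ with $N \gtrsim 1/\epsilon$, a smooth cut-off $\chi \in C^\infty_c(\I)$ with $\chi \equiv 1$ on $\I_{\epsilon/2}$, and a smooth $\frac{1}{q}$-periodic $F:\T \to \R$ whose derivative is a smoothed sawtooth with $N$ teeth. Take $h$ to be a composition of time-$1$ Hamiltonian flows of Hamiltonians of the form $H_v(\theta, y) = \chi(y) F(\theta)$ (vertical shear blocks) and $H_h(\theta, y) = \chi(y) G(y)$ (horizontal twist blocks, which are $\theta$-independent and hence automatically commute with every $R_\alpha$). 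Each block is $C^\infty$, symplectic, compactly supported in $\check{\A}$, and the vertical block is $\frac{1}{q}$-periodic in $\theta$; so is the full composition $h$.

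On $\A_{\epsilon/2}$, Hamilton's equations reduce to pure shears: $\varphi_1^{H_v}(\theta, y) = (\theta, y - F'(\theta))$ and $\varphi_1^{H_h}(\theta, y) = (\theta + G'(y), y)$. Alternating vertical shears (spreading each circle in the $y$-direction) with horizontal twists (redistributing the spread in $\theta$) produces a map that sends each circle $\T \times \{y\}$, $|y| < 1-\epsilon$, into a curve made of $N$ nearly vertical segments, one lying in each slab $S_k := [k/N, (k+1)/N] \times \I_{\epsilon/2}$. For the Wasserstein estimate I would invoke Kantorovich--Rubinstein duality: matching each such arc (of $\Leb_{\T \times \{y\}}$-mass $1/N$) with the corresponding slab $S_k$ (of $\Leb_\A$-mass $\approx 1/N$) gives a transport plan of cost $O(1/N) = O(\epsilon)$, and absorbing the $\Leb_\A$-mass $O(\epsilon)$ of the boundary strip $\I \setminus \I_{\epsilon/2}$ similarly yields $d_K(h_*\Leb_{\T \times \{y\}}, \Leb_\A) \leq C\epsilon$, which is the lemma after a harmless rescaling of $\epsilon$.

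The main obstacle is that a single vertical shear of amplitude close to $2$---which one would naively need for the image of $\T \times \{y\}$ to span all of $\I_\epsilon$---is incompatible with compact support in $\check{\A}$: points starting with $y$ close to $\pm(1-\epsilon)$ would be pushed out of $\A$ altogether. The alternating vertical/horizontal composition is designed precisely to avoid this: each vertical block shifts by a small amount $\delta \ll 1$ while preserving $\check{\A}$, and the interleaved horizontal twists prevent the cumulative shift from drifting out of $\I$. Book-keeping the iterated images through $M \approx 1/\delta$ alternations---using the $\frac{1}{q}$-symmetry to argue uniform filling modulo the $R_{1/q}$-action---recovers the desired spread, and the transport-plan estimate above then delivers the Wasserstein bound while smoothness, symplecticity, compact support in $\check{\A}$ and commutation with $R_{1/q}$ all survive composition.
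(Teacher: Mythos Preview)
Your approach is genuinely different from the paper's. The paper (following \cite[Lemma 2.2]{berger_analytic_2022}) does not iterate shears at all: it partitions most of $\A$ into a large grid of congruent boxes, chooses a box permutation that sends each horizontal row of boxes to a ``diagonal'' family visiting every height, realizes this permutation by piecewise symplectic affine maps on the boxes, and then invokes Moser's trick once to smooth and extend to a map in $\sympc{\infty}{\A}$. Commutation with $R_{1/q}$ is enforced by making the grid and the permutation $\tfrac{1}{q}$-periodic in $\theta$, and the Wasserstein estimate is immediate because each horizontal line (away from a set of small measure) crosses one box of each column, and these boxes are sent to boxes spread evenly over $\I$. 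There is no drift issue and no iteration to book-keep.

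Your shear-composition proposal, by contrast, has a real gap at exactly the point you flag as ``the main obstacle''. On $\A_{\epsilon/2}$ your horizontal twist $\varphi_1^{H_h}(\theta,y)=(\theta+G'(y),y)$ leaves the $y$-coordinate untouched, so it cannot ``prevent the cumulative shift from drifting out of $\I$'': after $M$ alternations the total vertical displacement of a point is simply $-\sum_{j=1}^{M}F'(\theta_j)$, and nothing in the construction bounds this sum uniformly over all trajectories. Worse, for the image of $\T\times\{y\}$ to be $\epsilon$-close to $\Leb_\A$ you \emph{need} many arcs to accumulate displacement close to $\pm(1-\epsilon)$; the very trajectories you need for equidistribution are the ones that leave $\A_{\epsilon/2}$, at which point your ``pure shear'' formulas cease to hold (since $\chi'\neq 0$ there and the $\theta$-component of the flow becomes nontrivial). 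So the picture of ``$N$ nearly vertical segments, one per slab'' is justified only for the portion of the orbit that never exits $\A_{\epsilon/2}$, which is too small a portion to yield the Wasserstein bound. Making the iterated-shear idea rigorous would require either a careful analysis of the full Hamiltonian flow in the cutoff region, or a redesign in which the spreading to full height is achieved in a single controlled step; the paper's box-permutation route sidesteps all of this.
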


\begin{proof}[Sketch of proof]
The idea is to construct a symplectomorphism which moves and deforms boxes into other boxes of the same volume with a symplectic affine map extended to $\A$ using Moser's trick. The boxes are chosen such that most of the horizontal lines are sent to curves close to be equidistributed on $\A$. As depicts \cref{fig:erg}.

\begin{figure}[!h]
\centering
\includegraphics[scale=0.35]{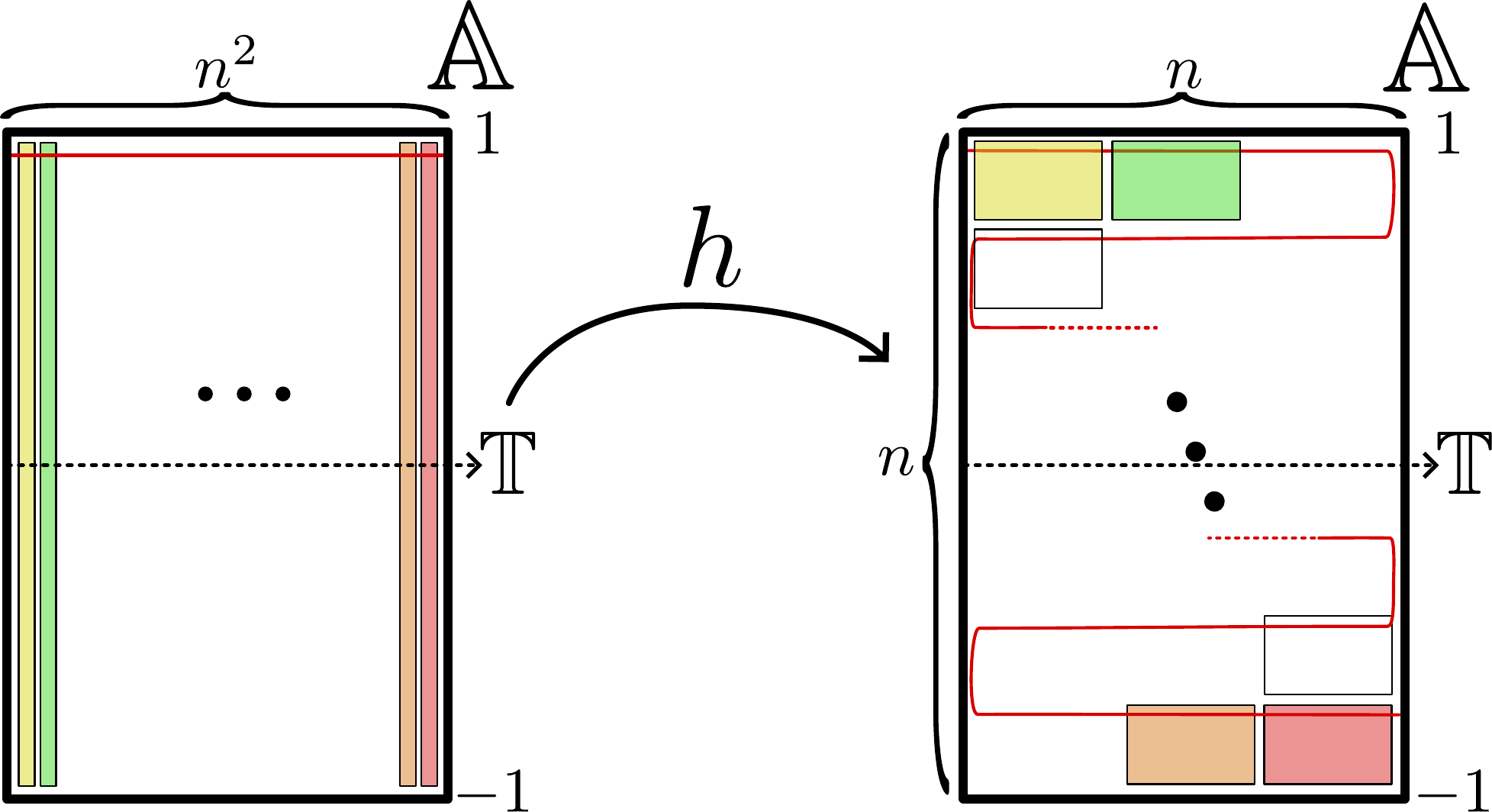}
\caption{Lemma for ergodicity}
\label{fig:erg}
\end{figure}

\end{proof}

Let us define a scheme which realizes ergodicity.\\

Let $\alpha = \frac{p}{q} \in \Q$, $h \in \sympo{0}{\A}$, $f = \iconj{h}{R_\alpha}$ and let $\epsilon >0$ be minimal such that for every $x$ in $h(\A_\epsilon)$, $e^f(x) = e^f_q(x)$ is $\epsilon$-close to $\Leb_\A$.\\

The following proposition will define a $C^0$-AbC scheme $(U,\nu)$ at the above elements $(h,\alpha)$.

\begin{prop}\label{prop:scheme_erg}
There is an open set $U(h,\alpha)$ in $\mathcal{T}^0$ containing a symplectomorphism $\hat{h}\in \sympo{0}{\A}$ such that $\iconj{\hat{h}}{R_\alpha} = f$. In addition, for every such $\hat{h}$ there exists $\nu(\hat{h},\alpha)>0$ s.t. for $0< \lvert \hat{\alpha} - \alpha \rvert < \nu(\hat{h},\alpha)$, the map $\hat{f} := \iconj{\hat{h}}{R_{\hat{\alpha}}}$ satisfies:
\begin{enumerate}[label = \arabic*)]
\item $d_K(e^{\hat{f}}(x), \Leb_\A) \leq \epsilon/2$ for every $x$ in $\hat{h}(\A_{\epsilon/2})$,
\item $d_{K,\infty}(e^f_k,e_k^{\hat{f}} ) := \sup_{x\in \A}(d_K(e_k^f(x),e_k^{\hat{f}}(x)) < \epsilon/2$ for every $k\leq q$,
\item $d_{C^0}(\hat{f},f) < \epsilon/2$,
\item if $\hat{\alpha}$ is rational, its denominator is greater than $q$.
\end{enumerate}
\end{prop}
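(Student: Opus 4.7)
The plan is to build the privileged $\hat h$ by composing $h$ with a symplectomorphism supplied by \cref{lemma:herg}, and then to obtain $U(h,\alpha)$ and $\nu(\hat h',\alpha)$ from elementary continuity and equidistribution arguments.

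First, write $\alpha=p/q$ and apply \cref{lemma:herg} with integer $q$ and a small parameter $\delta>0$ (to be fixed), producing $g\in\sympc{\infty}{\A}$ that commutes with $R_{1/q}$ (hence with $R_\alpha=R_{1/q}^p$) and such that $g_*\Leb_{\T\times\{y\}}$ is $\delta$-close to $\Leb_\A$ for every $y\in\I_\delta$. Setting $\hat h:=h\circ g\in\sympo{0}{\A}$, the commutation $gR_\alpha=R_\alpha g$ immediately yields $\iconj{\hat h}{R_\alpha}=hR_\alpha h^{-1}=f$. Since $h$ is continuous on the compact cylinder $\A$, the pushforward $h_*$ is uniformly continuous on $(\mathcal M(\A),d_K)$, so for $\delta$ chosen small enough we have $d_K(\hat h_*\Leb_{\T\times\{y\}},\Leb_\A)<\epsilon/8$ for every $y\in\I_{\epsilon/2}$. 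Using the elementary Kantorovich--Rubinstein bound $d_K(\phi_*\mu,\psi_*\mu)\leq d_{C^0}(\phi,\psi)$, we may then take $U(h,\alpha)$ to be a $\mathcal T^0$-neighborhood of $\hat h$ so small that every $\hat h'\in U$ with $\iconj{\hat h'}{R_\alpha}=f$ still satisfies $d_K(\hat h'_*\Leb_{\T\times\{y\}},\Leb_\A)<\epsilon/4$ for every $y\in\I_{\epsilon/2}$.

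Fix such an $\hat h'$ and write $\hat f=\iconj{\hat h'}{R_{\hat\alpha}}$; the choice of $\nu(\hat h',\alpha)>0$ is tuned to handle the four requirements simultaneously. Property 4) is immediate since rationals of denominator $\leq q$ form a discrete subset of $\R$. Property 3) follows from uniform continuity of $\hat h'$ on $\A$: as $\hat\alpha\to\alpha$, $R_{\hat\alpha}\to R_\alpha$ uniformly, so $\hat f\to f$ in $C^0$. Property 2) is obtained by induction on $k\leq q$ using the telescoping bound $d_K(e_k^f(x),e_k^{\hat f}(x))\leq\tfrac{1}{k}\sum_{j=1}^{k}d(f^j(x),\hat f^j(x))$ together with Property 3). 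For Property 1), further shrink $\nu$ so that every rational $\hat\alpha=p'/q'\in(\alpha-\nu,\alpha+\nu)\setminus\{\alpha\}$ has an arbitrarily large denominator $q'$; the $R_{\hat\alpha}$-orbit on $\T\times\{y\}$ is then $1/q'$-close in $d_K$ to $\Leb_{\T\times\{y\}}$, and uniform continuity of $\hat h'$ makes $e^{\hat f}(x)=\hat h'_*\mu_y^{q'}$ close to $\hat h'_*\Leb_{\T\times\{y\}}$, which by the previous paragraph is within $\epsilon/4$ of $\Leb_\A$, delivering $d_K(e^{\hat f}(x),\Leb_\A)<\epsilon/2$ on $\hat h'(\A_{\epsilon/2})$ (an irrational $\hat\alpha$ gives the equidistribution directly).

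The main obstacle is Property 1): one must control the \emph{limit} empirical measure of $\hat f$ on a positive-measure slab through a merely $C^0$-conjugacy $\hat h'$ whose Lipschitz constant is uncontrolled, so $\hat h'_*$ is not a priori Lipschitz for $d_K$. Compactness of $\A$ upgrades continuity into uniform continuity of $\hat h'_*$ on $(\mathcal M(\A),d_K)$, and the flexibility --- granted by smallness of $\nu$ --- to push the denominator of $\hat\alpha$ to infinity refines the horizontal equidistribution to any required precision; Properties 2), 3) and 4) are essentially formal once Property 1) is set up this way.
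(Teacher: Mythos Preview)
Your proof is correct and follows essentially the same route as the paper: compose $h$ with a map from \cref{lemma:herg}, use the Kantorovich--Rubinstein bound $d_K(\phi_*\mu,\psi_*\mu)\le d_{C^0}(\phi,\psi)$ to carve out the $\mathcal T^0$-neighborhood $U(h,\alpha)$, and then shrink $\nu$ to force large denominators and $C^0$-closeness of iterates. The one stylistic difference is that you invoke uniform continuity of $h_*$ on the compact space $(\mathcal M(\A),d_K)$ directly to pass from $g_*\Leb_{\T\times\{y\}}\approx\Leb_\A$ to $(h\circ g)_*\Leb_{\T\times\{y\}}\approx\Leb_\A$, whereas the paper makes this step explicit by first approximating $h$ in $C^0$ by a $Q$-bi-Lipschitz $g\in\symp{1}{\A}$ and using the Lipschitz bound on $g_*$; your shortcut is exactly the content of the paper's \cref{coro:kanto}, so the two arguments are really the same.
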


\begin{proof}
Let $g \in \symp{1}{\A}$ be $\epsilon/16$-$C^0$ close to $h$ and $Q>1$ such that $g$ is $Q$-bi-Lipschitz. By the previous lemma, for $\eta \in (0,\tfrac{\epsilon}{16Q})$, there is a map $\widetilde{h}$ commuting with $R_{\frac{1}{q}}$ and satisfying that $\widetilde{h}_*\Leb_{\T\times \lbrace y \rbrace}$ is $\eta$-close to $\Leb_\A$ for every $y$ in $\I_\eta$.\\
Then define $\hat{h} = h \circ \widetilde{h}$. As $\widetilde{h}$ commutes with $R_\alpha$ we obtain
$$\iconj{\hat{h}}{R_\alpha} = f.$$

First, let us show that $\hat{h}_*\Leb_{\T\times \lbrace y \rbrace}$ is $\epsilon/8$-close to $\Leb_\A$ for every $y$ in $\I_{\epsilon/8}$. By \cref{prop:dk_c0} and as $g$ is $\epsilon/16$-$C^0$ close to $h$, we have for every $y\in I_\eta$ that 
$$d_K((h\circ \widetilde{h})_*\Leb_{\T\times \lbrace y \rbrace}, (g \circ \widetilde{h})_*\Leb_{\T\times \lbrace y \rbrace}) \leq \epsilon/16.$$
Moreover, we have by \cref{prop:kanto} and as $g$ is area preserving and $Q$-bi-Lipschitz that 
$$d_K((g \circ \widetilde{h})_*\Leb_{\T\times \lbrace y \rbrace}, \Leb_\A) \leq Qd_K(\widetilde{h}_*\Leb_{\T\times \lbrace y \rbrace}, \Leb_\A) \leq Q\eta \leq \frac{\epsilon}{16}.$$
Therefore, we obtain with the two latter inequalities that $\hat{h}_*\Leb_{\T\times \lbrace y \rbrace}$ is $\epsilon/8$-close to $\Leb_\A$ for every $y$ in $\I_{\epsilon/8}\subset \I_\eta$.\\

Since the measures $\hat{h}_*\Leb_{\T\times \lbrace y \rbrace}$ depend $C^0$-continuously on $\hat{h}$ uniformly on $y$ (\cref{prop:dk_c0}), there is a neighbourhood $U(h,\alpha)$ of $\hat{h}$ in $\mathcal{T}^0$ such that every $H$ in $U(h,\alpha)$ satisfies that $H_*\Leb_{\T\times \lbrace y \rbrace}$ is $\epsilon/4$ close to $\Leb_\A$ for every $y$ in $\I_{\epsilon/2}$.\\

Now let $\hat{h}$ be any element of $U(h,\alpha)$ satisfying $f = \iconj{\hat{h}}{R_\alpha}$. Let $(\theta,y) = \hat{h}^{-1}(x) \in \A_{\epsilon/2}$.\\
Given $\hat{\alpha} = \frac{\hat{p}}{\hat{q}}$, with $\hat{p}\wedge \hat{q} = 1$, and let $\hat{f} = \iconj{\hat{h}}{R_{\hat{\alpha}}}$, we have that
$$e^{\hat{f}}(x) = \frac{1}{\hat{q}} \sum_{k=1}^{\hat{q}} \delta_{\hat{h}(\theta + \frac{k\hat{p}}{\hat{q}},y)} = \hat{h}_*\left(\frac{1}{\hat{q}} \sum_{k=1}^{\hat{q}} \delta_{(\theta + \frac{k\hat{p}}{\hat{q}},y)}\right) .$$
Therefore when $\hat{q}$ is large, then the measure $\frac{1}{\hat{q}} \sum_{k=1}^{\hat{q}} \delta_{(\theta + \frac{k\hat{p}}{\hat{q}},y)}$ is uniformly close to $\Leb_{\T\times \lbrace y \rbrace}$ for every $y$ in $\I_{\epsilon/2}$. In particular, as $\hat{h}_*\Leb_{\T \times \croset{y}}$ is $\epsilon/4$ close to $\Leb_\A$ by construction of $U(h,\alpha)$, we obtain by \cref{coro:kanto} that for $\hat{q}$ large $e^{\hat{f}}(x)$ is $\epsilon/2$ close to $\Leb_\A$.\\
Hence we can define $\nu(\hat{h},\alpha)$ small enough so that every rational number $\hat{\alpha} = \frac{\hat{p}}{\hat{q}}$, satisfying $0 < \lvert \alpha - \hat{\alpha} \rvert < \nu(H,\alpha)
$, has its denominator $\hat{q}>q$ large enough, giving 4), such that:
\begin{itemize}
\item $e^{\hat{f}}(x)$ is $\epsilon/2$-close to $\Leb_\A$ for every $x$ in $\hat{h}(\A_{\epsilon/2})$.
\end{itemize}
Moreover, by taking $\nu(\hat{h},\alpha)$ small enough, we can ensure that $\hat{f}$ is sufficiently close to $f$ such that:
\begin{itemize}
\item $d_{K,\infty}(e^f_k,e_k^{\hat{f}} ) < \epsilon/2$ for $k\leq q$,
\item $d_{C^0}(f,\hat{f}) < \epsilon/2$.
\end{itemize}
Giving us 1), 2) and 3).
\end{proof}

Finally, by considering this scheme $(U,\nu)$, we are going to show that ergodicity is AbC $C^0$-realizable as stated in \cref{prop:real_erg}..

\begin{proof}[Proof of \cref{prop:real_erg}]
Let us prove that $(U,\nu)$ is a well-defined $C^0$-AbC scheme (see \cref{def:abc}).\\
For $(h,\alpha) \in \sympo{0}{\M} \times \Q/\Z$, \cref{prop:scheme_erg} ensures the existence of $\hat{h} \in U(h,\alpha)$ satisfying $\iconj{h}{R_\alpha}=\iconj{\hat{h}}{R_\alpha}$. Therefore Condition a) of \cref{def:abc} is satisfied.\\

To obtain Condition b), let us consider a sequence $f_n =\iconj{h_n}{R_{\alpha_n}}$ such that:
\begin{enumerate}[label = \arabic*)]
\item $\alpha_0 =0$ and $h_0 = id$,
\item $h_{n+1} \in U(h_n, \alpha_n)$ with $\iconj{h_{n+1}}{R_{\alpha_n}} = f_n$ and $0< \lvert \alpha_n - \alpha_{n+1} \rvert < \nu(h_{n+1},\alpha_n)$,
\end{enumerate}
and we write $\alpha_n = \frac{p_n}{q_n}$, with $p_n \wedge q_n = 1$.\\

Up to rescaling the distance on $\A$ we can assume that $\diam(\A) = \tfrac{1}{2}$, then we have that $e^{f_0}(x)$ is $\frac{1}{2}$-close to $\Leb_\A$ for every $x$ in $\A_{1/2}$. Then if we define for $n \in \N$:
\begin{equation}
\epsilon_n := \frac{1}{2^{n+1}},
\end{equation}
we obtain by \cref{prop:scheme_erg} the following properties for every $n\in \N^*$:
\begin{enumerate}[label = \Roman*)]
\item for every $x \in h_n(\A_{\epsilon_n})$ we have $d_K(e^{f_n}(x),\Leb_\A) \leq \epsilon_n$,
\item for every $k\leq q_{n-1}$ we have $\sup_{x\in \A}d_K(e^{f_n}_k(x),e^{f_{n-1}}_k(x)) \leq \epsilon_n$,
\item $d_{C^0}(f_n,f_{n-1})\leq \epsilon_n$,
\item and $q_n > q_{n-1}$.
\end{enumerate}
Therefore $(f_n)_n$ is a Cauchy sequence by III), hence converges toward some $f$ in $\symp{0}{\A}$, i.e. Condition b) of \cref{def:abc} is satisfied.\\

Now, let us show that $f$ is ergodic.\\

As $(f_n)$ converges to $f$, we have by II) and IV) for every $x \in \A$:
$$d_K(e^f_{q_n}(x), e^{f_n}(x)) \leq \sum_{m \geq n} d_K(e^{f_{m+1}}_{q_n}(x),e^{f_m}_{q_n}(x)) \leq \sum_{m \geq n} \epsilon_n = \epsilon_{n-1}.$$

Then it follows by I) and the latter equation:
\begin{equation}\label{eq:erg}
\forall x \in h_n(\A_{\epsilon_n}) \; : \; d_K(e^f_{q_n}(x), \Leb_\A) \leq \epsilon_{n-1} + \epsilon_n \leq \epsilon_{n-2}.
\end{equation}

Therefore we consider the set 
$$X = \displaystyle \bigcup_{n\geq 0} \bigcap_{k \geq n} h_k (\A_{\epsilon_k})$$
which is of full Lebesgue measure, as its complementary is a decreasing intersection of sets of measure lower than $\epsilon_{n-1}$. 
For $x$ in $X$ there exists $n \geq 0$ such that for every $k \geq n$, $x$ belongs to $h_k(\A_{\epsilon_k})$, therefore for every $k \geq n$ we have by \eqref{eq:erg}:
$$d_K(e_{q_k}^f(x),\Leb_\A) \leq \epsilon_{k-2}.$$
Hence we have: 
$$e_{q_k}^f(x) \underset{k \rightarrow +\infty}{\longrightarrow} \Leb_\A.$$\\
Up to reducing $X$ by intersecting it with the set of points $x$ where $(e_k^f(x))_k$ converges, which is also of full Lebesgue measure by Birkhoff. We obtain the ergodicity of $f$.\\

Finally, the well-known invariance by $\mathrm{Symp}^0$ conjugacy of ergodicity concludes the proof. 
\end{proof}

\section{Maximal order of local emergence}\label{sec:emer}

The purpose of this section is to prove the \cref{prop:real_emer} which states that having maximal order of local emergence is an AbC realizable $C^1$-property. To this end, we use a symplectomorphism $g_0$ built in \cite[Claim 2.6]{berger_analytic_2022} on the cylinder. The symplectomorphism $g_0$ sends a partition $(\T \times I_j)_j$ of $\A$ by a huge amount of horizontal strips onto a partition by subsets whose foliations by circles are in mean spaced apart. Then the flow $\iconj{g_0}{R_t}$ exhibits high emergence at a certain small scale $\eta$. However, when we push forward this construction by $\pi$ to the disk or the sphere to a map $g$, we encounter an issue. In fact, the emergence is a bi-Lipschitz invariant, but the partial derivatives of $\pi$ blow up near components of the boundary of $\A$.  To face this issue, we use the fact that $g_0$ can be chosen $C^0$-close to the identity. This will allow us to work on compact subsets $K$ and $g_0(K)$ of $int(\A)$ where $\pi$ is $Q$-bi-Lipschitz. Hence the flow $\iconj{g}{R_t}$ will exhibit high emergence on $\pi(g_0(K))$ at scale $\frac{\eta}{Q}$.\\

Next, since the symplectomorphism $g$ will be used to conjugate rotations, we introduce the following measures, whose pushforward by $g$ will be of particular interest: 
$$\forall y \in \I \; : \;\mu_y := \pi_* \Leb_{\T\times \croset{y}}.$$

Note that, when $\M = \Sp$ or $\Di$, $\mu_{\pm 1}$ are Dirac masses on the poles of the sphere, and $\mu_{-1}$ is a Dirac mass on the center of the disk. \\

Before going into the proof of the realization of emergence, we can show that emergence is invariant under $\symp{1}{\M}$-conjugacy as stated above. We recall that a symplectomorphism $f$ has a maximal order of local emergence if:
\begin{equation}\label{eq:def_emer}
 \overline{O}\mathcal{E}_{loc}(f) = \limsup_{\rconv{\epsilon}{0}} \int_{\mathcal{M}(\M)} \frac{\log\lvert \log \hat{e}(B(\mu,\epsilon))\rvert}{\lvert \log\epsilon \rvert}d\hat{e}(\mu) = 2.
\end{equation}
Where $\hat{e} = e^f_*\Leb_\M$ is the ergodic decomposition of $f$.

\begin{prop}\label{prop:inv_emer}
Having a maximal order of local emergence is invariant under $\symp{1}{\M}$-conjugacy.
\end{prop}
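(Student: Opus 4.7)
The plan is to transfer the $h$-conjugacy $g=\iconj{h}{f}$ from the level of orbits to the level of empirical measures, and then to exploit the fact that the pushforward on the space of measures is bi-Lipschitz for the Kantorovich distance. First, from the identity $g^{k}=h\circ f^{k}\circ h^{-1}$ I immediately get $e^{g}_{n}(x) = h_{*}e^{f}_{n}(h^{-1}(x))$ and, passing to the limit, $e^{g}(x)=H(e^{f}(h^{-1}(x)))$ for $\Leb$-a.e.\ $x$, where
$$H : \mathcal{M}(\M) \to \mathcal{M}(\M),\qquad \mu \mapsto h_{*}\mu.$$
Since $h$ is a symplectomorphism, $h_{*}\Leb=\Leb$, so the ergodic decompositions are related by $\hat{e}_{g} = (e^{g})_{*}\Leb = H_{*}\hat{e}_{f}$.

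Next, because $\M$ is compact and $h\in\symp{1}{\M}$, the map $h$ is $Q$-bi-Lipschitz for some $Q\ge 1$; by the compatibility of the Kantorovich distance with bi-Lipschitz pushforwards (\cref{prop:kanto}), $H$ itself is $Q$-bi-Lipschitz on $(\mathcal{M}(\M),d_{K})$. Therefore, for every $\mu$ and every $\epsilon>0$,
$$B(H^{-1}(\mu), \epsilon/Q) \ \subset\ H^{-1}(B(\mu,\epsilon)) \ \subset\ B(H^{-1}(\mu), Q\epsilon),$$
which, combined with $\hat{e}_{g}(B(\mu,\epsilon)) = \hat{e}_{f}(H^{-1}(B(\mu,\epsilon)))$, yields the sandwich
$$\hat{e}_{f}(B(H^{-1}\mu, \epsilon/Q)) \ \le\ \hat{e}_{g}(B(\mu,\epsilon)) \ \le\ \hat{e}_{f}(B(H^{-1}\mu, Q\epsilon)).$$

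Finally, I would plug this into the definition of $\overline{O}\mathcal{E}_{loc}(g)$ after the change of variable $\mu=H(\nu)$, using $\hat{e}_{g} = H_{*}\hat{e}_{f}$ so that $d\hat{e}_{g}(\mu) = d\hat{e}_{f}(\nu)$. The lower bound in the sandwich gives $\log|\log \hat{e}_{g}(B(H\nu,\epsilon))|\le \log|\log \hat{e}_{f}(B(\nu,\epsilon/Q))|$, and substituting $\epsilon'=\epsilon/Q$ leads to
$$\overline{O}\mathcal{E}_{loc}(g)\ \le\ \limsup_{\epsilon'\to 0}\int \frac{\log|\log \hat{e}_{f}(B(\nu,\epsilon'))|}{|\log(Q\epsilon')|}\, d\hat{e}_{f}(\nu).$$
Since $|\log(Q\epsilon')|/|\log\epsilon'|\to 1$ as $\epsilon'\to 0$, for every $\delta>0$ and every $\epsilon'$ small enough we have $|\log(Q\epsilon')|\ge (1-\delta)|\log\epsilon'|$, so the right-hand side is at most $(1-\delta)^{-1}\overline{O}\mathcal{E}_{loc}(f)$. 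Sending $\delta\to 0$ gives $\overline{O}\mathcal{E}_{loc}(g) \le \overline{O}\mathcal{E}_{loc}(f)$; applying the same argument with $h$ replaced by $h^{-1}$ yields equality, and in particular the property $\overline{O}\mathcal{E}_{loc}=2$ is preserved.

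The only slightly delicate step is this last one, where the $Q$ introduced by the bi-Lipschitz constant must be absorbed into the $\limsup$. This is harmless thanks to the uniform bound $\overline{O}\mathcal{E}_{loc}\le 2$ from \cite{helfter_scales_2024}, which keeps the integrand bounded and makes the rescaling $\epsilon\mapsto\epsilon/Q$ negligible in the log-log limit.
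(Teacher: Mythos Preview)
Your argument is correct and follows essentially the same route as the paper: push the conjugacy to the level of empirical measures via $e^{g}=h_{*}\circ e^{f}\circ h^{-1}$, use \cref{prop:kanto} to make $h_{*}$ bi-Lipschitz on $(\mathcal{M}(\M),d_K)$, and absorb the resulting constant $Q$ in the $\log$--$\log$ limit. The only cosmetic difference is that you prove the full equality $\overline{O}\mathcal{E}_{loc}(g)=\overline{O}\mathcal{E}_{loc}(f)$ by symmetry, whereas the paper establishes only $\overline{O}\mathcal{E}_{loc}(g)\ge\overline{O}\mathcal{E}_{loc}(f)$ and then invokes the a priori upper bound $\overline{O}\mathcal{E}_{loc}\le 2$ to conclude.
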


\begin{proof}
Let $f$ be a symplectomorphism of $\M$ such that $\overline{O}\mathcal{E}_{loc}(f) = 2$. Let $g \in \symp{1}{\M}$ and $Q>0$ such that $g$ is $Q$-bi-Lipschitz. Let us show that $\overline{O}\mathcal{E}_{loc}(\iconj{g}{f}) \geq \overline{O}\mathcal{E}_{loc}(f)$. Then, as $\overline{O}\mathcal{E}_{loc}(f)$ is maximal and equals $2$, we will deduce that $\overline{O}\mathcal{E}_{loc}(\iconj{g}{f}) =2$, i.e. the conjugacy has a maximal order of local emergence.\\ First, for $\Leb$ a.e. $x \in \M$, we have the following expression of the empirical function of $\iconj{g}{f}$:
$$e^{\iconj{g}{f}}(x) = \lconv{n}{\infty} \frac{1}{n}\isum{k=1}{n}\delta_{\iconj{g}{f^k}(x)} = \lconv{n}{\infty} g_* \left(\frac{1}{n}\isum{k=1}{n}\delta_{f^k(g^{-1}(x))}\right) = g_* e^f(g^{-1}(x)).$$

Then, if $\hat{e}_0 := {e^{\iconj{g}{f}}}_* \Leb$ denotes the ergodic decomposition of the conjugacy, we have for $\Leb$ a.e. $x \in \M$ by the latter equation and as $g$ is symplectic:
\begin{align*}
\hat{e}_0 (B(e^{\iconj{g}{f}}(x),\epsilon)) &= \Leb \croset{x' \in \M \, : \, d_K(e^{\iconj{g}{f}}(x),e^{\iconj{g}{f}}(x')) \leq \epsilon}\\
&=\Leb \croset{x' \in \M \, : \, d_K(g_*e^{f}(g^{-1}(x)),g_*e^{f}(x'))\leq \epsilon}
\end{align*}

It follows by \cref{prop:kanto} as $g$ is $Q$-bi-Lipschitz:
$$\hat{e}_0 (B(e^{\iconj{g}{f}}(x),\epsilon)) \geq \Leb \croset{x' \in \M \, : \, d_K(e^{f}(g^{-1}(x)),e^{f}(x')) \leq Q\epsilon} \geq  \hat{e} (B(e^{f}(g^{-1}(x)),Q\epsilon)).$$
Thus, we obtain with the expression of the order of local emergence \eqref{eq:def_emer} and the latter inequality:

\begin{align*}
\overline{O}\mathcal{E}_{loc}(\iconj{g}{f}) &=  \limsup_{\rconv{\epsilon}{0}} \int_{\M} \frac{\log\lvert \log \hat{e}_0(B(e^{\iconj{g}{f}}(x),\epsilon))\rvert}{\lvert \log\epsilon \rvert}d\Leb(x)\\
&\geq \limsup_{\rconv{\epsilon}{0}} \int_{\M} \frac{\log\lvert \log \hat{e} (B(e^{f}(g^{-1}(x)),Q\epsilon))\rvert}{\lvert \log \epsilon \rvert}d\Leb(x).
\end{align*}
Then, as $g$ is conservative we obtain:
$$\overline{O}\mathcal{E}_{loc}(\iconj{g}{f}) \geq \limsup_{\rconv{\epsilon}{0}} \frac{\lvert \log Q\epsilon \rvert}{\lvert \log \epsilon \rvert} \int_{\M} \frac{\log\lvert \log \hat{e} (B(e^{f}(x),Q\epsilon))\rvert}{\lvert \log Q\epsilon \rvert}d\Leb(x) = \overline{O}\mathcal{E}_{loc}(f) = 2.$$
Hence, the conjugacy $\iconj{g}{f}$ has a maximal order of local emergence, which concludes the proof.
\end{proof}

\subsection{Lemma for emergence}\label{sec:lem_em}
We first extract from \cite[Claim 2.6]{berger_analytic_2022} the following result on the cylinder.

\begin{lemma}\label{lemma:em_ber}
For every $q\in \N^*$ and $\epsilon_0 >0$ small enough, there exists $\eta_0>0$ arbitrary small and $g_0\in \sympc{\infty}{\A}$ such that:

\begin{enumerate}[label = \roman*)]
\item the symplectomorphism $g_0$ and its inverse are $\epsilon_0$-$C^0$ close to $\id$ and $g_0\circ R_{\frac{1}{q}} = R_{\frac{1}{q}} \circ g_0$,
\item and for every $y\in \I$ we have: 
$$\Leb_\I \croset{y' \in \I, d_K({g_0}_*\mesy{y},{g_0}_*\mesy{y'}) \leq 3\eta_0} \leq \exp(-\eta_0^{-2+\epsilon_0}).$$
\end{enumerate}

\end{lemma}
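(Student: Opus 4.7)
My approach is to realize $g_0$ as a ``coding map'' that, on each of $N \approx \exp(\eta_0^{-2+\epsilon_0})$ thin horizontal strips of $\A$, transports the horizontal measures $\mesy{y}$ onto distinct target probability measures $\nu_j$; the targets are placed inside slabs of vertical width $\epsilon_0$ so as to keep $g_0$ $C^0$-close to the identity, while $R_{1/q}$-equivariance is built in by imposing $1/q$-periodicity in $\theta$ from the outset.

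The first step is a counting argument. Set $N := \lceil \exp(\eta_0^{-2+\epsilon_0}) \rceil$ and partition $\I$ into equal sub-intervals $I_1,\ldots,I_N$ of centers $c_j$. For each $j$ I would select a probability measure $\nu_j$ supported on a smooth $R_{1/q}$-invariant closed curve inside the slab $\T \times [c_j - \epsilon_0/2, c_j + \epsilon_0/2]$, in such a way that for every pair $j \neq k$ of indices whose slabs intersect, $d_K(\nu_j,\nu_k) \geq 5\eta_0$. The existence of such a family is a metric-entropy estimate: the number of strips whose slab meets a given slab is of order $\epsilon_0 N$, while the $\eta_0$-entropy of probability measures supported in a slab of area $\epsilon_0$ is of order $\exp(C\epsilon_0\eta_0^{-2})$; the inequality $\log(\epsilon_0 N) \leq C\epsilon_0\eta_0^{-2}$ holds for $\eta_0$ small enough, which permits the selection.

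The second step is the construction of $g_0$ itself. As in the proof of \cref{lemma:herg}, I would subdivide each strip $\T \times I_j$ into many small symplectic boxes of $\theta$-length $1/(qM_j)$ and $y$-length $|I_j|$, and build a piecewise symplectic affine map sending the horizontal foliation of the strip onto a foliation of the slab of $\nu_j$ whose arclength measure is $\eta_0$-close to $\nu_j$. Smoothing via Moser's trick yields an element of $\sympc{\infty}{\A}$, and performing the construction on a single $\theta$-fundamental domain $[0, 1/q) \times I_j$ then replicating it forces equivariance with $R_{1/q}$. The estimate that ${g_0}_*\mesy{y}$ is $\eta_0$-close to $\nu_j$ for every $y \in I_j$ is then immediate; combined with the separation of Step 1, this gives ii), while the confinement of $g_0 - \id$ to slabs of width $\epsilon_0$ gives i).

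The main obstacle is the compatibility between the three constraints: $C^0$-closeness confines each pushforward to a thin slab of vertical width $\epsilon_0$, which shrinks the per-slab $\eta_0$-entropy budget from $\exp(\eta_0^{-2})$ down to $\exp(\epsilon_0\eta_0^{-2})$. This is precisely why the exponent $-2+\epsilon_0$ appears in the statement, and it also forces a careful choice of buffer: separating the $\nu_j$'s by $5\eta_0$ so that an $\eta_0$-approximation error at each end still leaves a net $3\eta_0$-separation between the pushforward measures, thereby producing the advertised exponential decay of $\Leb_\I\croset{y' : d_K({g_0}_*\mesy{y},{g_0}_*\mesy{y'}) \leq 3\eta_0}$.
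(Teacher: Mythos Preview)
Your proposal shares the right skeleton with the paper's sketch---box decomposition, Moser's trick, and a packing/counting argument for the separation---but the combinatorial core and the route to $C^0$-smallness differ, and there is a gap.

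The paper (following the Berger--Bochi combinatorial lemma) does not select target measures $\nu_j$ in advance. Instead it realises $\widetilde g_0$ as a \emph{pearl-sorting} map: the cylinder is tiled by boxes (``pearls''), the pearls in each horizontal ``necklace'' receive colours, and $\widetilde g_0$ permutes the pearls by sorting them according to colour. Because this is a permutation of a fixed tiling, the result is automatically a global area-preserving bijection; the Wasserstein separation between necklaces is a consequence of the combinatorics of the colourings rather than of a metric-entropy estimate on the space of measures. Property i) is then obtained \emph{a posteriori} by replicating $\widetilde g_0$ on a fine tiling, not by confining each strip's image to a thin slab as you propose.

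The gap in your argument is the global coherence of $g_0$. You build $g_0$ strip by strip, sending each $\T\times I_j$ into the slab of $\nu_j$; but a symplectomorphism of $\A$ is a bijection of $\A$, so the images $g_0(\T\times I_j)$ must partition $\A$ exactly. With $N\approx\exp(\eta_0^{-2+\epsilon_0})$ strips and slabs of width $\epsilon_0$, on the order of $\epsilon_0 N\gg 1$ strips compete for room inside the same slab, and your proposal does not explain how their images are interleaved so that the piecewise-affine pieces glue to a single bijection before Moser is applied. The permutation viewpoint sidesteps this entirely: sorting is a bijection of the pearl tiling, so the images tile $\A$ for free. Your metric-entropy count for the $\nu_j$ is plausible (and morally dual to the combinatorial lemma), but even granting it, you would still need to convert the strip-by-strip specification into an honest permutation of boxes to close the argument.
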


\begin{remark}
The measure $\Leb_\I$ is normalized to be a probability measure on $\I = [-1,1]$, i.e. for $[a,b] \subset \I$ we have $\Leb_\I [a,b] = \frac{b-a}{2}$. Hence we have $\Leb_\A = \Leb_\T \times \Leb_\I$.
\end{remark}

To obtain this result, Berger first uses his combinatorial argument with Bochi \cite[Prop. 4.2]{berger_emergence_2021} to obtain a construction of coloured necklaces on the cylinder, each necklace is composed of pearls which are box shaped, and colours are attributed to these pearls. Then, with Moser's trick, the necklace's pearls are sorted by color by a symplectomorphism $\widetilde{g}_0$ and the combinatorial argument ensures that the distances between images of two necklaces are lower bounded in mean, giving ii). Then, i) is obtained by replicating $\widetilde{g}_0$ on a tiling to obtain the wished symplectomorphism $g_0$.\\

Now, let us push forward this symplectomorphism by $\pi$ to obtain a similar result on $\M \in \croset{\A , \Sp, \Di}$. Yet, we will have to pay attention to the contraction of distances by $\pi$ to obtain ii) on $\M$. This will be done by using the fact that the symplectomorphism $g_0$ is $C^0$-close to $\id$. This will allow us to work on compact subsets of $int(\A)$ where $\pi$ is bi-Lipschitz. Then, let us recall the notations $\I_\epsilon := (-1+\epsilon, 1-\epsilon)$ and $\M_\epsilon := \pi(\T \times \I_\epsilon)$ for $\epsilon>0$, and observe that $\M_\epsilon$ is relatively compact in $\check{\M}$.

\begin{coro}\label{coro:symp_emer}
For every $\epsilon >0$ small enough and $q\in \N^*$, there exist $\eta>0$ arbitrary small and $g\in \sympc{\infty}{\M}$ such that:

\begin{enumerate}[label = \roman*)]
\item the symplectomorphism $g$ and its inverse are $\epsilon$-$C^0$ close to $\id$ and $g\circ R_{\frac{1}{q}} = R_{\frac{1}{q}} \circ g$,
\item and for every $y\in \I_\epsilon$ we have:
$$\Leb_\I \croset{y' \in \I, d_K(g_*\mu_y,g_*\mu_{y'}) \leq 3\eta} \leq \exp(-\eta^{-2+\epsilon}).$$
\end{enumerate}

\end{coro}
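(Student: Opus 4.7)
The plan is to push the cylinder construction of \cref{lemma:em_ber} forward by $\pi$ to $\M$, using the $C^0$-smallness of $g_0$ to confine the measures that matter to a region where $\pi$ is bi-Lipschitz. The obstacle, as flagged just before the corollary, is that $\pi$ is \emph{not} globally bi-Lipschitz: its derivatives degenerate at $\partial \A$.

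Given $\epsilon > 0$ and $q \in \N^*$, I would first fix a bi-Lipschitz constant $Q = Q(\epsilon/2) \geq 1$ for the restriction of $\pi$ to $\overline{\A_{\epsilon/2}}$; this is valid because $\pi$ is a smooth diffeomorphism between $\overline{\A_{\epsilon/2}}$ and its image in $\check{\M}$, so its derivative and inverse-derivative are bounded on this relatively compact set. I would then apply \cref{lemma:em_ber} with this $q$ and with $\epsilon_0 := \min\bigl(\epsilon/(2Q),\, \epsilon/2\bigr) < \epsilon$, obtaining $g_0 \in \sympc{\infty}{\A}$ with a scale $\eta_0 > 0$ to be chosen later. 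I would set $g := \pi \circ g_0 \circ \pi^{-1}$ on $\check{\M}$ and extend by the identity on $\M \setminus \check{\M}$. Since $g_0$ is the identity in a neighborhood of $\partial \A$, this extension is smooth and lies in $\sympc{\infty}{\M}$. The commutation $g \circ R_{1/q} = R_{1/q} \circ g$ follows from the intertwining $\pi \circ R_{1/q} = R_{1/q} \circ \pi$ in~\eqref{eq:pi} combined with the analogous property of $g_0$, and the $C^0$ estimate
\[
d(g(x),x) \leq Q \cdot d(g_0(\pi^{-1}(x)), \pi^{-1}(x)) \leq Q\epsilon_0 \leq \epsilon
\]
(with the symmetric estimate for $g^{-1}$) follows from the Lipschitz bound on $\pi$. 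This settles~(i).

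For~(ii), the conjugacy identity $g \circ \pi = \pi \circ g_0$ on $\check{\A}$ yields $g_* \mu_y = \pi_*(g_0)_* \mesy{y}$. For $y \in \I_\epsilon$, the $C^0$-smallness of $g_0$ forces the support of $(g_0)_* \mesy{y}$ to lie in $\A_{\epsilon - \epsilon_0} \subset \A_{\epsilon/2}$, where $\pi^{-1}$ is $Q$-Lipschitz. Applying \cref{prop:kanto} to $\pi^{-1}$ then gives, for all $y, y' \in \I_\epsilon$,
\[
d_K\bigl((g_0)_* \mesy{y},\, (g_0)_* \mesy{y'}\bigr) \leq Q \cdot d_K(g_* \mu_y, g_* \mu_{y'}).
\]
Setting $\eta := \eta_0/Q$, the bad set $\{y' \in \I : d_K(g_*\mu_y, g_*\mu_{y'}) \leq 3\eta\}$ is contained in the bad set of \cref{lemma:em_ber} at level $3\eta_0$, hence has $\Leb_\I$-measure at most $\exp(-\eta_0^{-2+\epsilon_0})$. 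The remaining step is to verify $\exp(-\eta_0^{-2+\epsilon_0}) \leq \exp(-\eta^{-2+\epsilon})$, which simplifies to the inequality $\eta_0^{\epsilon - \epsilon_0} \leq Q^{-(2-\epsilon)}$. Since $\epsilon - \epsilon_0 > 0$ and $\eta_0$ may be chosen arbitrarily small by the lemma, this holds for all sufficiently small $\eta_0$.

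The hard part is the exponent bookkeeping. Without the strict slack $\epsilon - \epsilon_0 > 0$, the Lipschitz factor $Q$ would destroy the super-exponential tail bound of \cref{lemma:em_ber}; with it, $Q^{2-\epsilon}$ is absorbed into a single power of the free parameter $\eta_0$. The geometric side (smoothness of the extension, commutation with $R_{1/q}$, and $C^0$-closeness) is routine once the supports of the measures $(g_0)_* \mesy{y}$ have been pinned down in the bi-Lipschitz region of $\pi$ by the $C^0$-smallness of $g_0$.
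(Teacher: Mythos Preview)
Your overall strategy matches the paper's, but there are two gaps.

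The minor one is in (i). Your $C^0$ estimate $d(g(x),x) \leq Q\,d(g_0(\pi^{-1}(x)),\pi^{-1}(x))$ uses the $Q$-Lipschitz bound on $\pi$, but $Q$ was defined only on $\overline{\A_{\epsilon/2}}$, and $\pi$ is \emph{not} globally Lipschitz (its $y$-derivative blows up at $\partial\A$). Since \cref{lemma:em_ber} gives no control on $\mathrm{supp}(g_0)$ inside $\check\A$, you cannot dismiss points $\pi^{-1}(x)$ near the boundary. The paper instead uses uniform continuity of the continuous extension of $\pi$ to all of $\A$, choosing $\epsilon_0$ small via its modulus of continuity rather than via $Q$.

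The substantive gap is in (ii). You prove the Kantorovich comparison only ``for all $y,y' \in \I_\epsilon$'', where both $(g_0)_*\mesy{y}$ and $(g_0)_*\mesy{y'}$ are supported in the bi-Lipschitz region, yet you then claim the inclusion of bad sets over all $y' \in \I$. This does not follow: $\I \setminus \I_\epsilon$ has $\Leb_\I$-measure $\epsilon$, which is enormous compared to the target $\exp(-\eta^{-2+\epsilon})$, so it cannot be absorbed. The paper closes this with a separate argument for $y' \in \I \setminus \I_{\epsilon/4}$: from $d_{C^0}(g,\id) \leq \epsilon/16$ together with the geometric lower bound $d_K(\mu_y,\mu_{y'}) \geq d\bigl(\pi(\T\times\{y\}),\pi(\T\times\{y'\})\bigr) \geq \tfrac14|y-y'| \geq 3\epsilon/16$ (obtained via a $1$-Lipschitz test function), one gets $d_K(g_*\mu_y, g_*\mu_{y'}) \geq \epsilon/16 > 3\eta$ once $\eta_0 < Q\epsilon/48$. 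Hence such $y'$ never lie in the left-hand bad set, and the inclusion is vacuous for them. Without this step, your measure bound in (ii) is unproved.
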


\begin{proof}
Let $\epsilon>0$ be small and $q \in \N^*$. Let $\epsilon_0 \in (0,\epsilon)$ be small, for $\eta_0>0$ arbitrary small there exists $g_0 \in \sympc{\infty}{\A}$ satisfying \cref{lemma:em_ber} for the parameters $\epsilon_0$, $q$ and $\eta_0$. We consider $g \in \sympc{\infty}{\M}$ to be the symplectomorphism which coincides with $\iconj{\pi}{g_0}$ on $\check{\M}$. First, $g$ does commute with $R_{1/q}$ by i) of \cref{lemma:em_ber}. Next, for $x \in \M$, if $x\in \M \setminus \check{\M}$ then $g(x) = x$ by definition of $\sympc{\infty}{\M}$. Otherwise, by i) of \cref{lemma:em_ber}, $g_0(\pi^{-1}(x))$ is $\epsilon_0$-close to $\pi^{-1}(x)$. Then, by uniform continuity of $\pi$ on $\A$, their images by $\pi$ are uniformly close when $\epsilon_0$ is small. Hence $g(x) = \iconj{\pi}{g_0}(x)$ is uniformly close to $x = \pi \circ \pi^{-1}(x)$ when $\epsilon_0$ is small. Likewise $g^{-1}$ is uniformly $C^0$-close to $\id$ when $\epsilon_0$ is small. Hence, by taking $\epsilon_0$ small enough, it proves i) of \cref{coro:symp_emer} with:
\begin{equation}\label{eq:g_id}
d_{C^0}(g,\id) \leq \frac{\epsilon}{16} \;\; \text{and} \;\; d_{C^0}(g^{-1},\id) \leq \frac{\epsilon}{16}.
\end{equation}

It remains to obtain ii).\\
First, let $Q>0$ be such that $\rest{\pi}{\A_{\epsilon/8}}$ is $Q$-bi-Lipschitz. We fix $y \in \I_\epsilon$. For $y' \in \I_{\epsilon/4}$, we have by \eqref{eq:g_id} that $g_*\mu_y$ and $g_*\mu_{y'}$ are supported in $\M_{\epsilon/8}$. Then we have by \cref{prop:kanto}, as $\pi$ is $Q$-bi-Lipschitz on $\A_{\epsilon/8}$:
\begin{equation*}
d_K({g_0}_*\mesy{y},{g_0}_*\mesy{y'}) \leq Qd_K(g_*\mu_y,g_*\mu_{y'}).
\end{equation*}
From this inequality we deduce the following inclusion:
\begin{equation}\label{eq:inc_eps}
\croset{y' \in \I_{\epsilon/4}, d_K(g_*\mu_y,g_*\mu_{y'}) \leq 3\frac{\eta_0}{Q}} \subset \croset{y' \in \I_{\epsilon/4}, d_K({g_0}_*\mesy{y},{g_0}_*\mesy{y'}) \leq 3\eta_0}
\end{equation}
Now let $y' \in \I\setminus \I_{\epsilon/4}$. We obtain by \eqref{eq:g_id} and \cref{prop:dk_c0} that:
$$d_K(g_*\mu_y,\mu_y) \leq \frac{\epsilon}{16} \;\; \text{and} \;\; d_K(g_*\mu_{y'},\mu_{y'}) \leq \frac{\epsilon}{16}.$$
Hence we obtain by triangular inequality:
\begin{equation}\label{eq:d_gmu}
d_K(g_*\mu_y, g_*\mu_{y'}) \geq d_K(\mu_y,\mu_{y'}) - \frac{\epsilon}{8}.
\end{equation}
Moreover, a simple computation of the derivative of $\pi$ on longitudes $\croset{\theta= \theta_0}$ (see \eqref{eq:pi} page \pageref{eq:pi} for the expression of $\pi$) allows to obtain:
\begin{equation*}\label{eq:pi_y}
d\left (\pi\left (\T\times \croset{y}\right ),\pi\left (\T\times \croset{y'}\right )\right ) \geq \tfrac{1}{4}\lvert y-y'\rvert \geq \frac{3\epsilon}{16}.
\end{equation*}
Then, using the 1-Lipschitz function
$x \mapsto \max(0,d(\pi(\T\times \croset{y}),\pi(\T\times \croset{y'})) - d(\pi(\T\times \croset{y}),x))$ which is greater than $\tfrac{3\epsilon}{16}$ on the support of $\mu_y$ by the last equation and equals $0$ on the support of $\mu_{y'}$, we obtain by Kantorovich's theorem:
$$d_K(\mu_y,\mu_{y'}) \geq \frac{3\epsilon}{16}$$
Then, \eqref{eq:d_gmu} becomes:
$$d_K(g_*\mu_y, g_*\mu_{y'}) \geq \frac{\epsilon}{16}.$$
Therefore, as $\eta_0$ is small, we can assume $\eta_0 < \frac{Q\epsilon}{48}$, which gives $d_K(g_*\mu_y, g_*\mu_{y'}) > \frac{3\eta_0}{Q}$ for every $y'\in \I\setminus \I_{\epsilon/4}$. i.e. we have:
$$\croset{y' \in \I, d_K(g_*\mu_y,g_*\mu_{y'}) \leq 3\frac{\eta_0}{Q}}=\croset{y' \in \I_{\epsilon/4}, d_K(g_*\mu_y,g_*\mu_{y'}) \leq 3\frac{\eta_0}{Q}}.$$
Hence we can improve \eqref{eq:inc_eps} into:
$$\croset{y' \in \I, d_K(g_*\mu_y,g_*\mu_{y'}) \leq 3\frac{\eta_0}{Q}} \subset \croset{y' \in \I_{\epsilon/4}, d_K({g_0}_*\mesy{y},{g_0}_*\mesy{y'}) \leq 3\eta_0}.$$
This gives by ii) of \cref{lemma:em_ber}:
$$\Leb_\I \left( \croset{y' \in \I, d_K(g_*\mu_y,g_*\mu_{y'}) \leq 3\frac{\eta_0}{Q}} \right) \leq \exp\left (-\eta_0^{-2+\epsilon_0}\right ).$$
Finally, if we define $\eta := \frac{\eta_0}{Q}$, as $\eta_0$ and $\epsilon_0$ are small, we obtain ii) of \cref{coro:symp_emer} with the latter equation.
\end{proof}

\subsection{AbC scheme}\label{sec:scheme_em}

Similarly to ergodicity, let us use the latter result to define an AbC scheme realizing the maximal order of local emergence.
Let $\alpha = \frac{p}{q} \in \Q$ and $h \in \sympo{1}{\M}$. We denote 
\begin{equation}
f := \iconj{h}{R_\alpha}.
\end{equation}
Let us define $U(h,\alpha)$ and $\nu(h,\alpha)$. First, we consider the following function of $h$ and $\epsilon'\in (0,2)$:
\begin{equation}\label{eq:eta_eps}
\eta(h,\epsilon') := \inf \croset{\eta' \in (0,1] \, : \, \forall y \in \I_{\epsilon'} \, , \, \Leb_\I \croset{y' \in \I, d_K(h_*\mu_y,h_*\mu_{y'}) \leq \eta'} \leq 3\exp(-{\eta'}^{-2+\epsilon'})}.
\end{equation}
The function $\eta$ provides the minimum scale $\eta'$ at which the flow $\iconj{h}{R_t}$ has high emergence.

\begin{fact}\label{fact:eta}
The function $\eta$ takes it values in $(0,1)$ and $\eta(h,\cdot)$ is a decreasing function.
\end{fact}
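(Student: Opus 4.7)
The plan is to handle separately the two assertions---values in $(0,1)$ and monotonicity in $\epsilon'$---since each calls for a different style of argument. Both will follow almost directly from the shape of the defining set, but the lower bound $\eta(h, \epsilon') > 0$ is where some geometric input is needed, and I expect that to be the main obstacle.

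For the upper bound $\eta(h, \epsilon') < 1$, I would argue by vacuity near $\eta' = 1$. At $\eta' = 1$ the ceiling $3\exp(-{\eta'}^{-2+\epsilon'})$ equals $3/e > 1$, which strictly exceeds the maximum possible value $1$ of the left-hand side, since the latter is a probability. By continuity of $\eta' \mapsto 3\exp(-{\eta'}^{-2+\epsilon'})$ on $(0,1]$, there exists $\eta_\star < 1$ such that the ceiling remains above $1$ on all of $[\eta_\star, 1]$; this whole interval then lies in the defining set, so the infimum is at most $\eta_\star$.

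For the lower bound $\eta(h, \epsilon') > 0$, the strategy is to show that for $\eta'$ below some positive threshold there is at least one $y \in \I_{\epsilon'}$ for which the condition fails---namely by exhibiting a lower bound of order $\eta'$ on $\Leb_\I\croset{y' \in \I : d_K(h_*\mu_y, h_*\mu_{y'}) \leq \eta'}$. Since $h \in \sympo{1}{\M}$ is a $C^1$-diffeomorphism of the compact surface $\M$, it is bi-Lipschitz; and the restriction of $\pi$ to $\A_{\epsilon'/2}$ is smooth with bounded derivative by the explicit formula \eqref{eq:pi}, so an elementary transport plan matching $\pi(\theta, y)$ to $\pi(\theta, y')$ gives $d_K(\mu_y, \mu_{y'}) \leq C(\epsilon')|y-y'|$ for $y, y'$ both in $\I_{\epsilon'/2}$. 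Together with \cref{prop:kanto} this yields $d_K(h_*\mu_y, h_*\mu_{y'}) \leq C'(\epsilon')|y-y'|$ locally, so the set in question contains an interval around $y$ of $\Leb_\I$-measure at least of order $\eta'/C'(\epsilon')$. Comparing this linear lower bound to the super-polynomially decaying ceiling $3\exp(-{\eta'}^{-2+\epsilon'})$, the condition must fail for all $\eta'$ below some positive threshold, forcing $\eta(h, \epsilon') > 0$. The subtlety to track is that $\pi$ is not globally Lipschitz on $\A$; restricting to the compact strip $\A_{\epsilon'/2}$ (which is why $y$ is taken in $\I_{\epsilon'}$ in the definition) is exactly what makes the Lipschitz estimate uniform.

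Finally, for monotonicity, I would check directly that the defining sets shrink as $\epsilon'$ decreases. If $\epsilon_1 < \epsilon_2$, then $\I_{\epsilon_1} \supset \I_{\epsilon_2}$, so the universal condition over $y$ is strictly stronger for $\epsilon_1$. Moreover, for $\eta' \in (0,1)$ one has $-2+\epsilon_1 < -2+\epsilon_2 < 0$, so ${\eta'}^{-2+\epsilon_1} \geq {\eta'}^{-2+\epsilon_2}$ and hence $3\exp(-{\eta'}^{-2+\epsilon_1}) \leq 3\exp(-{\eta'}^{-2+\epsilon_2})$. Both effects push the $\epsilon_1$-condition towards being more restrictive, so the defining set for $\epsilon_1$ is contained in that for $\epsilon_2$, and consequently $\eta(h, \epsilon_1) \geq \eta(h, \epsilon_2)$, as required.
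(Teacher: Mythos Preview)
Your proof is correct and follows essentially the same three-part structure as the paper's. The only minor difference is in the lower bound $\eta(h,\epsilon')>0$: the paper uses that $\pi$ is globally $1/2$-H\"older along longitudes to get $d_K(h_*\mu_y,h_*\mu_{y'})\leq CQ|y-y'|^{1/2}$ (hence a quadratic lower bound $(\eta'/CQ)^2$ on the measure), whereas you restrict to the compact strip $\A_{\epsilon'/2}$ to get a Lipschitz estimate and a linear lower bound---either suffices against the super-polynomial decay of $3\exp(-{\eta'}^{-2+\epsilon'})$.
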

\begin{proof}
Let $\epsilon' \in (0,2)$. First, observe that the set in the definition of $\eta(h,\epsilon')$ is not empty and contains a neighbourhood of $1$. Indeed, we have for $y\in \I_{\epsilon'}$:
$$\Leb_\I\croset{y'  \in \I, d_K(h_*\mu_y,h_*\mu_{y'}) \leq 1} \leq 1 < 3\exp (-1) = 3\exp (- 1^{-2+\epsilon'}).$$

Since $h \in \sympo{1}{\M}$, let $Q>1$ be such that $h$ is $Q$-bi-Lipschitz. By \cref{prop:kanto}, we have for $y,y' \in \I$ that $d_K(h_*\mu_y, h_*\mu_{y'}) \leq  Qd_K(\mu_y,\mu_{y'})$, in particular, as one can check that $\pi$ is $1/2$-Hölder on longitude $\croset{\theta = \theta_0}$, we have $C>0$ such that $d_K(h_*\mu_y, h_*\mu_{y'}) \leq CQ\lvert y-y'\rvert^{1/2} $. Then $\Leb_\I \croset{y' \in \I, d_K(h_*\mu_y,h_*\mu_{y'}) \leq \eta'} \geq \Leb_\I [y-(\tfrac{\eta'}{CQ})^2,y+(\tfrac{\eta'}{CQ})^2] =(\tfrac{\eta'}{CQ})^2$, which is large compared to $\exp(-{\eta'}^{-2+\epsilon'})$. Hence we have $\eta(h,\epsilon') >0$.\\

Let us show that $\eta(h,\cdot)$ is a decreasing function. Let $0 < \epsilon_1 < \epsilon_2 <2$. For $\eta' \in (0,1)$, observe that $\exp(-{\eta'}^{-2+\epsilon_1}) < \exp(-{\eta'}^{-2+\epsilon_2})$ and $\I_{\epsilon_2} \subset \I_{\epsilon_1}$. Then the condition required on $\eta'$ in \eqref{eq:eta_eps} is weakened in $\eta(h,\epsilon_2)$ in comparison with $\eta(h,\epsilon_1)$. Therefore, we have $\eta(h,\epsilon_2) < \eta(h,\epsilon_1)$.
\end{proof}

Next, we set 
\begin{equation}
\epsilon := \frac{1}{4q}.
\end{equation}
As $\eta(h,\cdot)$ is a decreasing function, note that $\eta(h,1) \leq \eta(h,\epsilon)$.
\begin{prop}\label{prop:scheme_emer}
There exists an open set $U(h,\alpha) \in \mathcal{T}^1$ containing a symplectomorphism $\hat{h}$ satisfying $\iconj{\hat{h}}{R_\alpha} =f$. Moreover, for any $H \in U(h,\alpha)$ it holds:
\begin{enumerate}[label = \roman*)]
\item 2$\eta(H,\epsilon) \leq \eta(h,1)$,
\item The symplectomorphism $H$ and its inverse are respectively $\epsilon\cdot\eta(h,1)$-$C^0$ close to $h$ and its inverse on $\M_{\epsilon\cdot\delta}$, with $\delta := \exp(-\eta(h,1)^{-2+\epsilon})$.
\end{enumerate}
\end{prop}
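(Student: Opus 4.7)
The plan is to set $\hat{h} := h \circ g$, where $g \in \sympc{\infty}{\M}$ is the symplectomorphism provided by \cref{coro:symp_emer} applied with the given $q$, some small parameter $\epsilon_c \in (0, \epsilon)$, and a scale $\eta_g > 0$ to be chosen very small. Since $g$ commutes with $R_{1/q}$, it commutes with $R_\alpha = R_{p/q}$, whence $\iconj{\hat{h}}{R_\alpha} = \iconj{h}{\iconj{g}{R_\alpha}} = \iconj{h}{R_\alpha} = f$ as required. Moreover $g \in \sympc{\infty}{\M}$ and $h \in \sympo{1}{\M}$ together imply $\hat{h} \in \sympo{1}{\M}$.

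To verify i) and ii) for $\hat{h}$ itself, the main tool is the global $Q$-bi-Lipschitz property of $h$, which is finite since $h \in \sympo{1}{\M}$. For ii), the bounds $d_{C^0}(g, \id) \leq \epsilon_c$ and $d_{C^0}(g^{-1}, \id) \leq \epsilon_c$ from \cref{coro:symp_emer} yield $d_{C^0}(\hat{h}, h) \leq Q \epsilon_c$ and $d_{C^0}(\hat{h}^{-1}, h^{-1}) \leq \epsilon_c$, so choosing $\epsilon_c$ small enough relative to $\epsilon \cdot \eta(h,1)$ settles ii). For i), \cref{prop:kanto} applied to $h^{-1}$ yields the inclusion
\[
\{ y' \in \I \,:\, d_K(\hat{h}_*\mu_y, \hat{h}_*\mu_{y'}) \leq \eta \} \subseteq \{ y' \in \I \,:\, d_K(g_*\mu_y, g_*\mu_{y'}) \leq Q\eta \},
\]
and with $\eta_g$ chosen so that $3\eta_g \geq Q\eta$, \cref{coro:symp_emer} bounds the Lebesgue measure of the right-hand set by $\exp(-\eta_g^{-2+\epsilon_c})$. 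Because $\epsilon_c < \epsilon$, the exponent $\eta_g^{-2+\epsilon_c}$ dominates $\eta^{-2+\epsilon}$ exponentially once both are small, so taking $\eta_g$ (and consequently $\eta$) small enough produces the bound $\exp(-\eta_g^{-2+\epsilon_c}) \leq 3\exp(-\eta^{-2+\epsilon})$ with considerable slack. Taking $\eta \leq \eta(h,1)/4$ then gives $\eta(\hat{h}, \epsilon) \leq \eta$.

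For the neighborhood $U(h, \alpha)$, I take a basic $\mathcal{T}^1$-neighborhood of $\hat{h}$ specified by $C^1$-closeness on a compact set in $\check{\M}$ containing $\overline{\M_{\epsilon \delta}}$ and $\overline{\M_\epsilon}$. Openness of ii) is immediate since $\mathcal{T}^1$ refines the $C^0$-topology on such compacts. For i), \cref{prop:dk_c0} applied on $\overline{\M_\epsilon}$ (which carries the support of $\mu_y$ for every $y \in \I_\epsilon$) gives $\sup_{y \in \I_\epsilon} d_K(H_*\mu_y, \hat{h}_*\mu_y) \leq \tau$ for $H$ sufficiently close to $\hat{h}$, with $\tau > 0$ arbitrarily small. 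The triangle inequality yields
\[
\{ y' : d_K(H_*\mu_y, H_*\mu_{y'}) \leq \eta \} \subseteq \{ y' : d_K(\hat{h}_*\mu_y, \hat{h}_*\mu_{y'}) \leq \eta + 2\tau \},
\]
and rerunning the bi-Lipschitz / \cref{coro:symp_emer} argument at scale $\eta + 2\tau$ (having built $\eta_g$ with enough room that $3\eta_g \geq Q(\eta + 2\tau)$ still holds) reproduces the bound $\leq 3\exp(-\eta^{-2+\epsilon})$ using the slack above. This yields $\eta(H, \epsilon) \leq \eta \leq \eta(h,1)/4$, whence i).

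The main obstacle is the delicate parameter balance between $\epsilon_c$, $\eta_g$, and the neighborhood radius. The strict inequality $\epsilon_c < \epsilon$ is essential: it is precisely what produces the asymptotic slack $\eta^{-(\epsilon-\epsilon_c)} \to \infty$ needed to absorb both the distortion factor $Q$ from the bi-Lipschitz transfer between $\hat{h}$ and $g$, and the triangle-inequality shift $2\tau$ from replacing $\hat{h}$ by a nearby $H$. Once $\eta$ and $\eta_g$ are taken small enough, all the required comparisons go through.
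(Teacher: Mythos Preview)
Your overall architecture matches the paper's: set $\hat{h}=h\circ g$ with $g$ from \cref{coro:symp_emer}, use that $h$ is $Q$-bi-Lipschitz to transfer the emergence estimate from $g$ to $\hat{h}$ via \cref{prop:kanto}, and then pass to a $\mathcal{T}^1$-neighborhood using \cref{prop:dk_c0}. The construction of $\hat{h}$, the commutation with $R_\alpha$, and the verification of ii) are correct and essentially identical to the paper.

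There is, however, a genuine gap in your perturbation argument for i). Your inclusion
\[
\{\,y' : d_K(H_*\mu_y, H_*\mu_{y'}) \leq \eta\,\} \subseteq \{\,y' : d_K(\hat{h}_*\mu_y, \hat{h}_*\mu_{y'}) \leq \eta + 2\tau\,\}
\]
requires $d_K(H_*\mu_{y'},\hat{h}_*\mu_{y'})\le\tau$, which you can only guarantee when $\mu_{y'}$ is supported in the compact on which the $\mathcal{T}^1$-neighborhood gives $C^0$-control. For $y'$ close to $\pm 1$ the support of $\mu_{y'}$ escapes any fixed compact of $\check{\M}$, and the $\mathcal{T}^1$-topology says nothing there. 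Yet the definition \eqref{eq:eta_eps} of $\eta(H,\epsilon)$ demands the measure bound over \emph{all} $y'\in\I$. With your choice of compact (containing $\overline{\M_{\epsilon\delta}}$) the uncontrolled set $\I\setminus\I_{\epsilon\delta}$ has measure $\epsilon\delta=\epsilon\exp(-\eta(h,1)^{-2+\epsilon})$, which is far larger than your target $3\exp(-\eta^{-2+\epsilon})$ once $\eta\le\eta(h,1)/4$; so even a perfect bound on the controlled part cannot yield $\eta(H,\epsilon)\le\eta$.

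The paper handles this by \emph{matching} the compact to the target scale: it aims for $\eta(H,\epsilon)\le\hat\eta$ with $\hat\delta:=\exp(-\hat\eta^{-2+\epsilon})$, takes the compact essentially as $\M_{\epsilon\hat\delta/2}$, restricts first to $y'\in\I_{\epsilon\hat\delta/2}$ to get a bound $\le\hat\delta/2$, and then simply adds $\Leb_\I(\I\setminus\I_{\epsilon\hat\delta/2})\le\hat\delta/2$, landing inside $3\hat\delta$. Your proof can be repaired by enlarging the compact from $\overline{\M_{\epsilon\delta}}$ to $\overline{\M_{c\exp(-\eta^{-2+\epsilon})}}$ for a suitable small $c$, and then explicitly adding the measure of the complement.
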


\begin{proof}
First, let us construct $\hat{h}$ by composing $h$ with a map $g$ from \cref{coro:symp_emer}. Let $Q\geq1$ be such that $h$ is $Q$-bi-Lipschitz. Let $\epsilon' \in (0,\epsilon)$ and $\eta \in (0,\eta(h,1))$ be small. Let $g \in \sympc{\infty}{\M}$ be a symplectomorphism satisfying \cref{coro:symp_emer} for the parameters $\epsilon'$, $q$ and $\eta$. Then we define $\hat{h} := h \circ g \in \sympo{1}{\M}$.\\

By i) of \cref{coro:symp_emer}, we have $f = \iconj{\hat{h}}{R_\alpha}$. Moreover, as $\epsilon'$ is small we can assume that $\epsilon' \leq \tfrac{\epsilon\eta(h,1)}{2Q}$. Then, by i) of \cref{coro:symp_emer} and as $h$ is $Q$-bi-Lipschitz, it follows:
\begin{equation}\label{eq:close_h}
d_{C^0}(\hat{h},h) \leq \epsilon\frac{\eta(h,1)}{2} \;\; \text{ and } \;\; d_{C^0}(\hat{h}^{-1},h^{-1}) \leq \epsilon\frac{\eta(h,1)}{2}
\end{equation}
Moreover by ii) of \cref{coro:symp_emer} we have:
\begin{equation}\label{eq:em_I}
\forall y \in \I_{\epsilon'} \; , \; \Leb_\I \croset{y' \in \I \, : \, d_K(g_*\mu_y,g_*\mu_{y'}) \leq 3\eta} \leq \exp(-\eta^{-2+\epsilon'}).
\end{equation}

Hence we define $\hat{\eta} := \tfrac{\eta}{2Q} \leq \frac{\eta(h,1)}{2}$. Observe that, as $\epsilon'$ and $\eta$ are small, we have 
\begin{equation}\label{eq:hat_delta}
\hat{\delta} := \exp(-\hat{\eta}^{-2+\epsilon}) \geq 2\exp(-\eta^{-2+\epsilon'}).
\end{equation}
Hence, by \cref{prop:kanto} and \eqref{eq:em_I} we obtain:
\begin{equation}\label{eq:em_hath}
\forall y \in \I_\epsilon \; : \; \Leb_\I \croset{y' \in \I \, : \, d_K(\hat{h}_*\mu_y,\hat{h}_*\mu_{y'}) \leq 3\hat{\eta}} \leq \Leb_\I \croset{y' \in \I \, : \, d_K(g_*\mu_y,g_*\mu_{y'}) \leq 3\eta} \leq \frac{\hat{\delta}}{2}.
\end{equation}

Now, let us define $U(h,\alpha)$ as a neighbourhood of $\hat{h}$ in $\mathcal{T}^1$ to obtain i) and ii).\\

We choose $U(h,\alpha)$ to be a sufficiently small neighbourhood of $\hat{h}$ in $\mathcal{T}^1$ so that, for every $H$ in $U(h,\alpha)$, $H$ and its inverse are $\hat{\eta}\cdot\epsilon$-$C^0$ close to $\hat{h}$ and its inverse on $\M_{\epsilon\cdot\delta/2}$. In particular, by \eqref{eq:close_h}, $H$ satisfies ii) as $\hat{\eta}\leq \frac{\eta(h,1)}{2}$. Then we obtain by triangular inequality, \cref{prop:dk_c0}, and \eqref{eq:em_hath} that for every $y \in \I_\epsilon$:
$$\Leb_\I \croset{y' \in \I_{\epsilon\cdot \hat{\delta}/2 } \, : \, d_K(H_*\mu_y,H_*\mu_{y'}) \leq \hat{\eta}} \leq \Leb_\I \croset{y' \in \I \, : \, d_K(\hat{h}_*\mu_y,\hat{h}_*\mu_{y'}) \leq 3\hat{\eta}} \leq \frac{\hat{\delta}}{2}.$$
Next, note that $\Leb_\I( \I\setminus\I_{\epsilon\cdot\hat{\delta}/2 }) \leq \frac{\hat{\delta}}{2}$. Then the latter inequality becomes:
$$ \forall y \in \I_\epsilon \; : \;\Leb_\I \croset{y' \in \I \, : \, d_K(H_*\mu_y,H_*\mu_{y'}) \leq \hat{\eta}} \leq \frac{\hat{\delta}}{2} +  \Leb_\I( \I\setminus\I_{\epsilon\cdot\hat{\delta}/2 }) = \hat{\delta} \leq 3\hat{\delta}.$$
Then, by \eqref{eq:eta_eps} and \eqref{eq:hat_delta}, the latter inequality gives us $\eta(H,\epsilon) \leq \hat{\eta} \leq \frac{\eta(h,1)}{2}$. Thus we have that i) of \cref{prop:scheme_emer} is satisfied.
\end{proof}

Then, by defining $\nu(h,\alpha) >0$ small enough, we immediately obtain the following.

\begin{prop}\label{prop:scheme_emer_nu}
There exists $\nu(h,\alpha) >0$ such that for every $\hat{\alpha} \in (\alpha -3\nu(h,\alpha), \alpha + 3\nu(h,\alpha))$ it holds:
\begin{enumerate}[label= \roman*)]
\item  $d_{C^1}(\iconj{h}{R_{\hat{\alpha}}} , f)\leq \epsilon$,
\item if in addition $\hat{\alpha} \in \Q\setminus \croset{\alpha}$, then its denominator is greater than $4q$.
\end{enumerate}
\end{prop}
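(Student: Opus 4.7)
The proposition bundles two independent, essentially routine facts: a $C^1$-continuity statement for the family $\hat{\alpha}\mapsto \iconj{h}{R_{\hat{\alpha}}}$, and an elementary Diophantine separation between distinct rationals of bounded denominator. My plan is to prove (i) and (ii) separately, obtaining two thresholds $\nu_1,\nu_2>0$, and then set $\nu(h,\alpha):=\min(\nu_1,\nu_2)$.

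For (i), I would invoke the fact that $\beta\in\T\mapsto \iconj{h}{R_\beta}\in \symp{1}{\M}$ is a $C^1$-continuous path. Indeed, $\M$ is compact and $h,h^{-1}\in \sympo{1}{\M}$, so $Dh$ and $Dh^{-1}$ are uniformly continuous; the rotation family $R_\beta$ depends smoothly on $\beta$ with $DR_\beta$ uniformly continuous in $(x,\beta)$. Expanding the chain rule, $D(\iconj{h}{R_{\hat{\alpha}}})(x) = Dh(R_{\hat{\alpha}}(h^{-1}(x)))\cdot DR_{\hat{\alpha}}(h^{-1}(x))\cdot Dh^{-1}(x)$, so its uniform distance to $Df$ is controlled by the modulus of continuity of $Dh$ composed with $R_{\hat{\alpha}}(h^{-1}(\cdot))$ and by $\|DR_{\hat{\alpha}}-DR_\alpha\|_\infty$, both of which tend to $0$ as $\hat{\alpha}\to\alpha$. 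The analogous argument gives the $C^0$ part. This produces $\nu_1>0$ such that $|\hat{\alpha}-\alpha|<3\nu_1$ implies $d_{C^1}(\iconj{h}{R_{\hat{\alpha}}},f)\leq \epsilon$.

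For (ii), I would use the standard bound that if $\alpha=p/q$ and $\hat{\alpha}=\hat{p}/\hat{q}$ are reduced rationals with $\hat{\alpha}\neq \alpha$, then $|\hat{\alpha}-\alpha|=|p\hat{q}-\hat{p}q|/(q\hat{q})\geq 1/(q\hat{q})$. Hence any rational $\hat{\alpha}\neq \alpha$ with denominator $\hat{q}\leq 4q$ satisfies $|\hat{\alpha}-\alpha|\geq 1/(4q^2)$. Choosing $\nu_2>0$ so that $3\nu_2<1/(4q^2)$ guarantees that every rational in $(\alpha-3\nu_2,\alpha+3\nu_2)\setminus\{\alpha\}$ has denominator strictly greater than $4q$.

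I do not anticipate any real obstacle here; the only mild point worth care is that the $C^1$-continuity in (i) is genuinely $C^1$, which requires the uniform continuity of $Dh$ on the compact surface $\M$ rather than just its continuity pointwise. Taking $\nu(h,\alpha):=\min(\nu_1,\nu_2)$ then yields both conclusions simultaneously.
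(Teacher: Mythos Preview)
Your proposal is correct and matches the paper's approach: the paper in fact gives no proof at all, simply stating that ``by defining $\nu(h,\alpha) >0$ small enough, we immediately obtain the following,'' so your argument via $C^1$-continuity of $\hat{\alpha}\mapsto \iconj{h}{R_{\hat{\alpha}}}$ for (i) and the elementary bound $|\tfrac{p}{q}-\tfrac{\hat p}{\hat q}|\geq \tfrac{1}{q\hat q}$ for (ii) is exactly the intended justification made explicit.
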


\subsection{Proof of the realization}

We recall that we have proved in \cref{prop:inv_emer} that having a maximal order of local emergence is invariant under $\symp{1}{\M}$-conjugacy. Then, to prove that having a maximal order of local emergence is a $C^1$-AbC realizable property, it remains to realize it with an AbC-$C^1$ scheme. The above subsection defines us a map:

$$\begin{array}{lccc}
(U,\nu): &\sympc{1}{\M} \times \Q/\Z &\rightarrow &\mathcal{T}^1 \times (0,\infty)\\
&(h,\alpha) &\mapsto &(U(h,\alpha),\nu(h,\alpha)).
\end{array}$$

Let us show that the map $(U,\nu)$ is a $C^1$-AbC scheme which realizes the property of having maximal order of local emergence. In particular, we prove \cref{prop:real_emer}.

\begin{proof}[Proof of \cref{prop:real_emer}]
First, let us show that \cref{prop:scheme_emer,prop:scheme_emer_nu} provide a well-defined AbC $C^1$-scheme $(U,\nu)$.\\

For $(h,\alpha) \in \sympc{1}{\M} \times \Q/\Z$, \cref{prop:scheme_emer} ensures the existence of $\hat{h} \in U(h,\alpha)$ satisfying $\iconj{h}{R_\alpha}=\iconj{\hat{h}}{R_\alpha}$. Therefore Condition a) of \cref{def:abc} is satisfied.\\

For Condition b), let $(h_n)_{n\in \N}$ and $(\alpha_n)_{n \in \N}$ be sequences constructed using the scheme $(U,\,\nu)$. i.e we have $h_0 = id$, $\alpha_0 = 0$, and, for $n \in \N$, the map $h_{n+1}$ belongs to $U(h_n,\alpha_n)$ and $0 < \lvert \alpha_n - \alpha_{n+1} \rvert < \nu(h_{n+1},\alpha_n)$. We also have for every $n$: 
$$f_n := \iconj{h_n}{R_{\alpha_n}}=\iconj{h_{n+1}}{R_{\alpha_n}},$$
and we write $\alpha_n = \frac{p_n}{q_n}$ with $p_n \wedge q_n =1$.\\

By considering the sequence $(\epsilon_n)_{n\in \N}$ defined by 
\begin{equation}
\forall n \in \N \, : \, \epsilon_n := \frac{1}{4q_n}.
\end{equation}
We have by \cref{prop:scheme_emer_nu}:
\begin{enumerate}[label = \Roman*)]
\item $q_{n} \geq 4q_{n-1}$, hence $\epsilon_n \leq \frac{\epsilon_{n-1}}{4} \leq \frac{1}{4^{n+1}}$,
\item $d_{C^1}(f_{n} , f_{n-1})\leq \epsilon_{n-1}$.
\end{enumerate}

Therefore, by I) and II), the sequence $(f_n)_n$ is a Cauchy sequence for $d_{C^1}$, hence it converges toward a map $f$ in $\symp{1}{\M}$. Therefore, Condition b) of \cref{def:abc} is satisfied by the scheme $(U,\nu)$ and $(U,\nu)$ is a well defined $C^1$-AbC scheme.\\

Now, let us show that $f$ exhibits a maximal order of local emergence, i.e. that the AbC scheme $(U,\nu)$ realizes the maximal order of local emergence.\\

First, we define sequences $(\eta_n)_{n\in \N^*}$ and $(\delta_n)_{n\in \N^*}$ by 
\begin{equation}
\forall n \in \N^* \, : \, \eta_n := \eta(h_n,\epsilon_{n-1}) \;\; \text{ and } \; \; \delta_n := \exp(-\eta_n^{-2+\epsilon_{n-1}}).
\end{equation}

By i) of \cref{prop:scheme_emer}, we have the following fact on these sequences:
\begin{fact}\label{fact:seq_eta}
The sequences $(\eta_n)_n$ and $(\delta_n)_n$ decrease and converge to $0$. Moreover we have for every $n \in \N^*$:
$$\eta_{n+1} \leq \frac{\eta(h_n,1)}{2} \leq \frac{\eta_n}{2}.$$
\end{fact}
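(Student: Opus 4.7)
The plan is straightforward: assemble the two inequalities in the displayed bound from (1) assertion i) of \cref{prop:scheme_emer} applied step-by-step along the sequence, and (2) the monotonicity of $\eta(h,\cdot)$ provided by \cref{fact:eta}.

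For the decay of $(\eta_n)_n$, I would first apply i) of \cref{prop:scheme_emer} to the pair $(h_n,\alpha_n)$ with the corresponding parameter $\epsilon = \epsilon_n = 1/(4q_n)$. Since $h_{n+1}\in U(h_n,\alpha_n)$, this immediately gives $2\eta_{n+1} = 2\,\eta(h_{n+1},\epsilon_n) \leq \eta(h_n,1)$, which is the left half of the displayed inequality. Next, the monotonicity in \cref{fact:eta}, together with $\epsilon_{n-1} \leq 1/4 \leq 1$, gives $\eta(h_n,1) \leq \eta(h_n,\epsilon_{n-1}) = \eta_n$, supplying the right half. Iterating then yields $\eta_n \leq 2^{-(n-1)}\eta_1$, so $(\eta_n)_n$ strictly decreases to $0$.

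For $(\delta_n)_n$, convergence to $0$ follows at once from $\eta_n \to 0$ and the crude bound $-2+\epsilon_{n-1} \leq -7/4 < 0$, which forces $\eta_n^{-2+\epsilon_{n-1}} \to +\infty$. To obtain monotonicity, my plan is to establish the stronger estimate $\delta_{n+1} \leq \delta_n^2 \leq \delta_n$ (the last step using $\delta_n\leq 1$), by a direct comparison of exponents: the bound $\eta_{n+1} \leq \eta_n/2 < 1$ together with $\epsilon_n \leq \epsilon_{n-1}$ yields $\eta_{n+1}^{-2+\epsilon_n} \geq 2\,\eta_n^{-2+\epsilon_{n-1}}$, where the extra factor $2$ absorbs the $2^{2-\epsilon_n}\geq 2$ coming from halving $\eta_n$.

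The main thing to watch is the direction of the inequalities when manipulating negative exponents on numbers in $(0,1)$; once this bookkeeping is carried out correctly, the rest reduces to routine bounds and a single iteration. No input beyond \cref{prop:scheme_emer} and \cref{fact:eta} is needed.
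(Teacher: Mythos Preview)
Your proposal is correct and follows essentially the same approach as the paper: the two halves of the displayed inequality come from \cref{prop:scheme_emer}\,i) applied to $h_{n+1}\in U(h_n,\alpha_n)$ and from the monotonicity of $\eta(h_n,\cdot)$ in \cref{fact:eta}, exactly as you outline. Your treatment of the monotonicity of $(\delta_n)_n$ is in fact more careful than the paper's (which dispatches it in one line), and your stronger bound $\delta_{n+1}\leq \delta_n^2$ is a harmless bonus.
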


\begin{proof}[Proof of \cref{fact:seq_eta}]
First, assuming the inequalities of the fact, we directly obtain that $(\eta_n)_n$ decreases and goes to $0$ as the $(\eta_n)$ are positive by \cref{fact:eta}. Then as $(\epsilon_n)_n$ also decreases to $0$ by I), then so does $(\delta_n)_n$.\\
Newt, let us show the inequalities. By i) of \cref{prop:scheme_emer}, as $h_{n+1} \in U(h_n,\alpha_n)$, we obtain $\eta_{n+1} \leq \frac{\eta(h_n,1)}{2}$. Then, as $\epsilon_n \leq 1$ by I), and $\eta(h_n,\cdot)$ is decreasing by \cref{fact:eta}, it comes that $\eta(h_n,1) \leq \eta(h_n,\epsilon_{n-1}) = \eta_n$. Therefore we obtain
$$\eta_{n+1} \leq \frac{\eta(h_n,1)}{2} \leq \frac{\eta_n}{2}.$$
\end{proof}

With these sequences, we obtain the following fact by ii) of \cref{prop:scheme_emer}:

\begin{fact}\label{fact:hn}
Let $n\in \N^*$, the maps $h_{n+1}$ and its inverse are respectively $\epsilon_n\cdot\eta_n$-$C^0$ close to $h_n$ and its inverse on $\M_{\epsilon_{n}\cdot\delta_{n}}$.
\end{fact}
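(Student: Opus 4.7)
The plan is to deduce Fact \ref{fact:hn} directly from part ii) of Proposition \ref{prop:scheme_emer}, applied to the pair $(h_n,\alpha_n)$ of which $h_{n+1}$ is a member of $U(h_n,\alpha_n)$. The associated $\epsilon$ in Proposition \ref{prop:scheme_emer} is precisely $\epsilon_n = \frac{1}{4q_n}$. So the raw conclusion we get is that $h_{n+1}$ and $h_{n+1}^{-1}$ are $\epsilon_n \cdot \eta(h_n,1)$-$C^0$ close to $h_n$ and $h_n^{-1}$ on the set $\M_{\epsilon_n\cdot\delta}$, where $\delta := \exp(-\eta(h_n,1)^{-2+\epsilon_n})$. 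The task is then to upgrade this statement from $(\eta(h_n,1),\delta)$ to $(\eta_n,\delta_n)$.

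The first upgrade is immediate: since $\eta(h_n,\cdot)$ is decreasing by Fact \ref{fact:eta} and $\epsilon_{n-1}\le 1$ (using I)), we have $\eta(h_n,1)\le \eta(h_n,\epsilon_{n-1}) = \eta_n$, so the closeness bound $\epsilon_n\cdot\eta(h_n,1)$ is automatically at most $\epsilon_n\cdot\eta_n$.

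For the second upgrade, I need to show $\delta_n\ge \delta$, because then $\M_{\epsilon_n\cdot\delta_n}\subset \M_{\epsilon_n\cdot\delta}$ (recall $\M_\eta$ shrinks as $\eta$ grows), and the closeness still holds on this smaller set. The comparison $\delta_n\ge \delta$ amounts to $\eta_n^{-2+\epsilon_{n-1}}\le \eta(h_n,1)^{-2+\epsilon_n}$. Two monotonicity facts combine to give this: on the one hand $\eta\mapsto \eta^{-2+\epsilon}$ is decreasing in $\eta$ (the exponent is negative since $\epsilon<2$), so replacing $\eta(h_n,1)$ by the larger $\eta_n$ only decreases the value; on the other hand, for $\eta<1$, the map $\epsilon\mapsto \eta^{-2+\epsilon}$ is decreasing in $\epsilon$ (larger, less negative exponent means a smaller power of a number below one), so replacing $\epsilon_n$ by the larger $\epsilon_{n-1}$ further decreases it. Both effects go the same way, so $\eta_n^{-2+\epsilon_{n-1}}\le \eta(h_n,1)^{-2+\epsilon_n}$ and hence $\delta_n\ge\delta$.

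The main (and only) nontrivial part of this is the second monotonicity observation, which requires checking that $\eta_n<1$ (true by Fact \ref{fact:eta}) so that both exponent comparisons pull in the same direction. Once $\delta_n\ge \delta$ is secured, the conclusion is formal: the $\epsilon_n\cdot\eta(h_n,1)$-$C^0$ closeness on $\M_{\epsilon_n\cdot\delta}$ implies the weaker $\epsilon_n\cdot\eta_n$-$C^0$ closeness on the smaller subset $\M_{\epsilon_n\cdot\delta_n}$, which is exactly Fact \ref{fact:hn}.
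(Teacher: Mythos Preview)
Your proof is correct and follows essentially the same route as the paper's: apply Proposition~\ref{prop:scheme_emer}~ii) to $(h_n,\alpha_n)$, use $\eta(h_n,1)\le\eta_n$ to upgrade the closeness constant, and use $\delta_n\ge\delta$ to restrict to the smaller domain $\M_{\epsilon_n\cdot\delta_n}$. The paper's version is terser---it simply asserts that $\eta_n\ge\eta(h_n,1)$ implies $\delta_n\ge\delta$---whereas you spell out the two monotonicity ingredients (in the base and in the exponent) that make this work; your added care here is appropriate, since the exponents $\epsilon_{n-1}$ and $\epsilon_n$ differ.
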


\begin{proof}[Proof of \cref{fact:hn}]
For $n\in \N^*$, as $h_{n+1} \in U(h_n,\alpha_n)$, we have by ii) of \cref{prop:scheme_emer} that $h_{n+1}$ and its inverse are respectively $\epsilon_n\cdot\eta(h_n,1)$-$C^0$ close to $h_n$ and its inverse on $\M_{\epsilon_{n}\cdot\delta}$, with $\delta := \exp(-\eta(h_n,1)^{-2+\epsilon_n})$. Yet, by \cref{fact:seq_eta} we have $\eta_n \geq \eta(h_n,1)$. Then $h_{n+1}$ and its inverse are respectively $\epsilon_n\cdot\eta_n$-$C^0$ close to $h_n$ and its inverse on $\M_{\epsilon_{n}\cdot\delta}$. Moreover $\eta_n \geq \eta(h_n,1)$ implies that $\delta_n \geq \delta$, i.e. $\M_{\epsilon_n\cdot\delta_n} \subset \M_{\epsilon_n\cdot\delta}$, which allows to conclude the proof.
\end{proof}

With \cref{fact:seq_eta,fact:hn}, $(h_n)_n$ and $(h_n^{-1})_n$ converge in the $C^0$ topology of $\check{\M}$. We denote by $h \in \symp{0}{\check{\M}}$ the limit of $(h_n)_n$. Then we want to express $f$ as the conjugacy of a rotation by $h$, so we prove the following fact.

\begin{fact}\label{fact:alpha}
The sequence $(\alpha_n)_n$ converges to $\alpha \in \R\setminus \Q$.\end{fact}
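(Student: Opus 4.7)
The plan is to exploit the freedom left in \cref{prop:scheme_emer_nu}: since that proposition only imposes an upper bound on $\nu(h,\alpha)$, we may strengthen its definition to also require
\[
\nu(h_{n+1},\alpha_n) \leq \frac{1}{2^n\, q_n^2},
\]
where $q_n$ denotes the denominator of $\alpha_n$ in lowest terms. With this reinforcement in place, I would first observe that $\lvert \alpha_{n+1}-\alpha_n \rvert < \nu(h_{n+1},\alpha_n) \leq 2^{-n}$, so $(\alpha_n)_n$ is a Cauchy sequence in $\R$ and converges to some $\alpha \in \R$.

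The substantive step is to show $\alpha \notin \Q$, which I would argue by contradiction. Assume $\alpha = p/q$ with $p\wedge q = 1$. By property I) established in the previous proof, $q_n \geq 4^n$, so for $n$ large enough $q_n > q$ and hence $\alpha_n = p_n/q_n \neq p/q$. This yields the standard lower bound for distinct rationals
\[
\lvert \alpha_n - \alpha \rvert = \Bigl\lvert \tfrac{p_n}{q_n} - \tfrac{p}{q} \Bigr\rvert \geq \frac{1}{q\, q_n}.
\]
On the other hand, telescoping together with the reinforced bound on $\nu$ and the monotonicity $q_k \geq q_n$ for $k \geq n$ gives
\[
\lvert \alpha_n - \alpha \rvert \leq \sum_{k\geq n} \lvert \alpha_{k+1} - \alpha_k \rvert \leq \sum_{k \geq n} \frac{1}{2^k q_k^2} \leq \frac{2}{q_n^2}.
\]
Combining these two inequalities forces $q_n \leq 2q$, contradicting $q_n \to \infty$. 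Hence $\alpha$ is irrational.

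The only potential obstacle is to ensure that reinforcing $\nu$ in this way remains compatible with all the previous uses of the scheme. This is harmless: both conditions i) and ii) of \cref{prop:scheme_emer_nu} consist of upper bounds on $\nu$, so taking the minimum of all required bounds yields a strictly positive $\nu$ that still satisfies every previously invoked property, and none of the earlier arguments of \cref{sec:emer} are affected.
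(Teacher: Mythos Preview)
Your argument is correct and takes a genuinely different route from the paper's.

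The paper does not modify the scheme at all: it works directly with $\nu$ as already given by \cref{prop:scheme_emer_nu}. It introduces the nested intervals $I_n = [\alpha_n - 2\nu(h_{n+1},\alpha_n),\ \alpha_n + 2\nu(h_{n+1},\alpha_n)]$, shows $I_{n+1}\subset I_n$ (using that $\alpha_n$, having denominator $q_n \leq 4q_{n+1}$, must lie outside the $3\nu(h_{n+2},\alpha_{n+1})$-interval about $\alpha_{n+1}$ by condition~ii)), and concludes that $\alpha \in \bigcap_n I_n$. Since every rational in $I_n$ other than $\alpha_n$ has denominator exceeding $4q_n$, and $\alpha_n\notin I_{n+1}$, the intersection contains no rational. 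Your approach instead shrinks $\nu$ up front and runs a Liouville-type estimate. Both are valid; the paper's method is intrinsic to the scheme as stated and exploits the $3\nu$-interval built into \cref{prop:scheme_emer_nu}, while yours is more elementary but requires retroactively strengthening that proposition.

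One small wrinkle worth noting: your bound $\nu(h_{n+1},\alpha_n)\leq 2^{-n}q_n^{-2}$ is expressed in terms of the index $n$, whereas $\nu$ must be a fixed function of $(h,\alpha)$ alone --- the scheme is defined once, before any particular sequence $(h_n,\alpha_n)$ is chosen (see \cref{def:abc}). This is easily repaired: impose instead $\nu(h,p/q)\leq q^{-3}$, which depends only on $\alpha$. Since $q_n\geq 4^n$ by property~I), this gives $\nu(h_{n+1},\alpha_n)\leq q_n^{-3}\leq 4^{-n}q_n^{-2}$ and your estimates go through unchanged. In fact $\nu(h,p/q)\leq q^{-2}$ already suffices, as the geometric growth $q_{k+1}\geq 4q_k$ yields $\sum_{k\geq n}q_k^{-2}\leq \tfrac{16}{15}\,q_n^{-2}$ directly.
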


\begin{proof}[Proof of \cref{fact:alpha}]
First, $(\alpha_n)_n$ is a Cauchy sequence as ii) of \cref{prop:scheme_emer_nu} imposes $\lvert \alpha_n - \alpha_{n+1} \rvert < \tfrac{1}{3q_n}$ and $4q_n \leq q_{n+1}$, let $\alpha$ be its limit. Let us show that $\alpha$ lives in the nested intersection of the intervals 
\begin{equation}\label{eq:In}
I_n := [\alpha_n - 2\nu(h_{n+1},\alpha_n), \alpha_n + 2\nu(h_{n+1},\alpha_n)].
\end{equation}
In which case, as in $I_n$ every rational has a denominator greater than $q_n$ by ii) of \cref{prop:scheme_emer_nu}, $\alpha$ must be irrational.\\
First, by definition of the sequence $(\alpha_n)_n$ we have $d(\alpha_n,\alpha_{n+1}) \leq \nu(h_{n+1},\alpha_{n})$. Moreover, ii) of \cref{prop:scheme_emer_nu} imposes $d(\alpha_n,\alpha_{n+1}) \geq 3\nu(h_{n+2},\alpha_{n+1})$ as $q_n \leq \tfrac{1}{4} q_{n+1}$ and every rational number different from $\alpha_{n+1}$ in $(\alpha_{n+1} -3\nu(h_{n+2},\alpha_{n+1}),\alpha_{n+1} +3\nu(h_{n+2},\alpha_{n+1}))$ has its denominator greater than $4q_{n+1}$. Therefore we have:
\begin{equation}\label{eq:d_alphn}
\nu(h_{n+1},\alpha_{n}) \geq d(\alpha_n,\alpha_{n+1}) \geq 3\nu(h_{n+2},\alpha_{n+1}).
\end{equation}
Then $I_{n+1} \subset I_n$ by the latter equation and \eqref{eq:In} (see \cref{fig:In}). In particular we obtain for every $n$ that $(\alpha_m)_{m\geq n}$ lives in the closed interval $I_n$ and converges to $\alpha$, hence $\alpha \in \cap_{n}I_n$ which concludes the proof.

\begin{figure}[!h]
\centering
\includegraphics[scale=0.35]{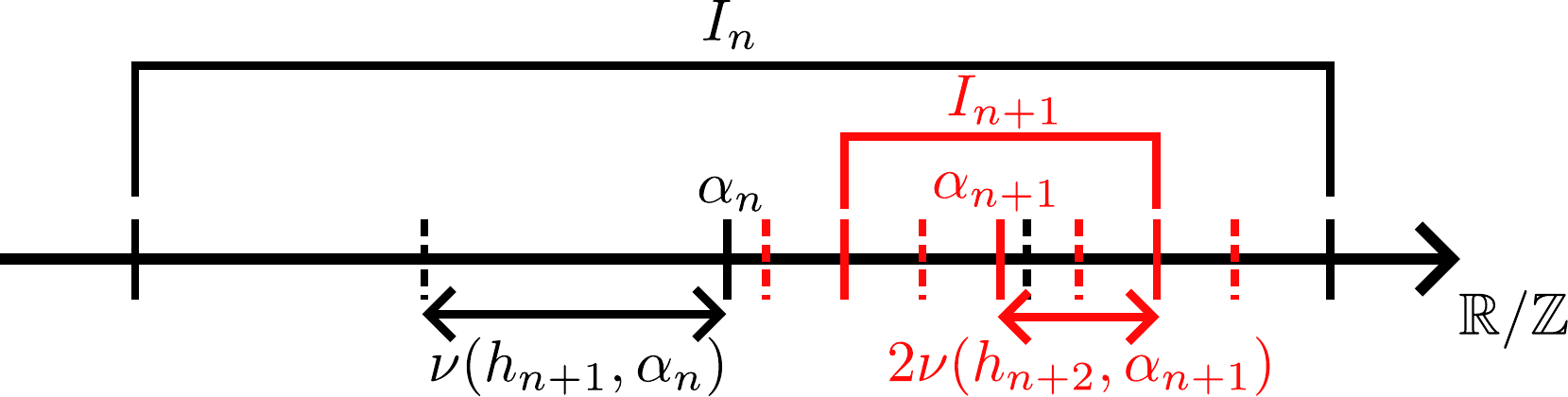}
\caption{Intervals $I_n$}
\label{fig:In}
\end{figure}

\end{proof}

By the latter fact and the convergence of $(h_n)_n$ and $(h_n^{-1})_n$, we obtain the following expression of $f$ on $\check{\M}$:
$$\rest{f}{\check{\M}} = \iconj{h}{R_\alpha}.$$

Then, for any $x = h (\pi(\theta,y)) \in \check{\M}$ we have:
\begin{equation}\label{eq:mes_emp}
 e^f(x) = h_*\mu_y.
\end{equation}

We prove below:
\begin{fact}\label{fact:emer}
For $n \in \N^*$, we have for every $x \in h(\M_{\epsilon_{n-1}})$:
$$\Leb_\M \croset{x' \in \M \, : \, d_K(e^f(x),e^f(x'))\leq \frac{\eta_n}{2}}  \leq (3+\epsilon_{n})\delta_{n}.$$
\end{fact}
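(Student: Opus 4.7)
The plan is to reduce the statement to a one-dimensional estimate on $\Leb_\I$ using \eqref{eq:mes_emp}, approximate the limit $h$ by the finite stage $h_n$ on a large compact set, and then invoke the defining inequality of $\eta_n = \eta(h_n,\epsilon_{n-1})$.

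First, for $x = h(\pi(\theta,y)) \in h(\M_{\epsilon_{n-1}})$ and $x' = h(\pi(\theta',y'))$, equation \eqref{eq:mes_emp} gives $e^f(x) = h_*\mu_y$ and $e^f(x') = h_*\mu_{y'}$. Since $h$ and $\pi$ preserve Lebesgue, Fubini reduces the target inequality to
$$\Leb_\I\croset{y' \in \I : d_K(h_*\mu_y,h_*\mu_{y'}) \leq \eta_n/2} \leq (3+\epsilon_n)\delta_n.$$
To replace $h$ by $h_n$, I would iterate \cref{fact:hn} for $k \geq n$: since $\epsilon_{k+1} \leq \epsilon_k/4$ (by I) and $\eta_{k+1} \leq \eta_k/2$ (by \cref{fact:seq_eta}), the product $\epsilon_k\eta_k$ decays geometrically by a factor of at least $8$, so
$$d_{C^0(\M_{\epsilon_n\delta_n})}(h,h_n) \leq \sum_{k\geq n} \epsilon_k\eta_k \leq 2\epsilon_n\eta_n.$$
Since $y \in \I_{\epsilon_{n-1}} \subset \I_{\epsilon_n\delta_n}$, the support of $\mu_y$ lies in $\M_{\epsilon_n\delta_n}$, so \cref{prop:dk_c0} gives $d_K(h_*\mu_y,(h_n)_*\mu_y) \leq 2\epsilon_n\eta_n$, and the same holds for $\mu_{y'}$ whenever $y' \in \I_{\epsilon_n\delta_n}$.

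To conclude, I would split the set $A(y) := \croset{y' \in \I : d_K(h_*\mu_y,h_*\mu_{y'}) \leq \eta_n/2}$ according to whether $y'$ lies in $\I_{\epsilon_n\delta_n}$ or not. The piece outside contributes at most $\Leb_\I(\I \setminus \I_{\epsilon_n\delta_n}) = \epsilon_n\delta_n$. On the piece inside, the triangle inequality together with the two $C^0$-approximations yields
$$d_K((h_n)_*\mu_y, (h_n)_*\mu_{y'}) \leq \eta_n/2 + 4\epsilon_n\eta_n \leq \eta_n,$$
using $\epsilon_n \leq 1/16$ (which holds by I). The defining inequality \eqref{eq:eta_eps} of $\eta_n = \eta(h_n,\epsilon_{n-1})$ then bounds this piece by $3\delta_n$, and summing gives the announced $(3+\epsilon_n)\delta_n$. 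The only mildly delicate point is checking that the $C^0$-approximation error $4\epsilon_n\eta_n$ fits within the slack between the scale $\eta_n/2$ at which we test $h$ and the scale $\eta_n$ at which $h_n$ is controlled; the geometric decay of $(\epsilon_k\eta_k)_k$ makes this automatic.
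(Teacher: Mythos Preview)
Your proposal is correct and follows essentially the same route as the paper: reduce to $\I$ via \eqref{eq:mes_emp}, split according to whether $y' \in \I_{\epsilon_n\delta_n}$, use \cref{fact:hn} iterated to pass from $h$ to $h_n$ on $\M_{\epsilon_n\delta_n}$, and then invoke the defining inequality \eqref{eq:eta_eps} at scale $\eta_n$. The only cosmetic difference is in how the tail $\sum_{k\ge n}\epsilon_k\eta_k$ is bounded: you use the geometric decay of the product $\epsilon_k\eta_k$ (ratio $\le 1/8$) to get $\le 2\epsilon_n\eta_n$ and then $4\epsilon_n\le 1/2$, whereas the paper bounds $\eta_k\le\eta_n$ and uses $\sum_k\epsilon_k\le 1/4$ directly; both give the needed slack $\eta_n/2$.
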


With this fact, we can compute the order of local emergence of $f$:
\begin{align*}
\displaystyle \int_{\mathcal{M}(\M)} \frac{\lvert\log \lvert \log \hat{e}\left( B(\mu,\tfrac{\eta_n}{2}) \right) \rvert \rvert}{\lvert \log \tfrac{\eta_n}{2} \lvert} d \hat{e}(\mu) &= \displaystyle \int_{\check{\M}} \frac{\log \lvert \log \Leb_\M \croset{x' \in \M, \; d_K(e^f(x),e^f(x') ) \leq \tfrac{\eta_n}{2}} \rvert }{\lvert \log \tfrac{\eta_n}{2} \rvert} d \Leb_\M(x)\\
&\geq \int_{h(\M_{\epsilon_{n-1}})} \frac{\log \lvert \log((3+\epsilon_{n})\delta_{n})\rvert}{-\log(\tfrac{\eta_n}{2})} d \Leb_\M(x)\\
&\geq  \left( 1- \epsilon_{n-1} \right ) \frac{\log \left\lvert \eta_{n}^{-2+\epsilon_{n-1}}-\log(3+\epsilon_n) \right\rvert}{-\log(\eta_n/2)} \lequiv{n}{\infty} 2.
\end{align*}

This gives that the order of local emergence of $f$ is 2, i.e. is maximal.\\

Finally, as \cref{prop:inv_emer} shows the invariance of having a maximal order of local emergence by $\symp{1}{\M}$-conjugacy, we conclude that having a maximal order of local emergence is a $C^1$-AbC realizable property.
\end{proof}

\begin{proof}[Proof of \cref{fact:emer}]
First, let us show the following inequality for every $n \in \N^*$ and $x = h (\pi(\theta,y)) \in h(\M_{\epsilon_{n-1}})$:

\begin{equation}\label{eq:mes_leb1}
\Leb_\M \croset{x' \in h(\M_{\epsilon_n\cdot\delta_{n}})\, : \, d_K(e^f(x),e^f(x'))\leq \frac{\eta_n}{2}}  \leq 3\delta_{n}.
\end{equation}
Assuming this inequality, since $\Leb_\M ( h(\M_{\epsilon_n\cdot\delta_{n}}) ) = 1-\epsilon_n\cdot\delta_{n}$ and $(\delta_m)_m$ is decreasing, we can prove \cref{fact:emer}:
$$\Leb_\M \croset{x' \in \M \, : \, d_K(e^f(x),e^f(x'))\leq \frac{\eta_n}{2}}  \leq 3\delta_{n} + \epsilon_n\cdot\delta_{n} = (3+\epsilon_{n})\delta_{n}.$$

Then, by \eqref{eq:mes_emp} and as $\Leb_\M \pi(\T \times J) =\Leb_\I J$, the inequality \eqref{eq:mes_leb1} is equivalent to the following inequality for $y \in \I_{\epsilon_{n-1}}$:

\begin{equation}\label{eq:mes_leb2}
\Leb_\I \croset{y' \in \I_{\epsilon_n\cdot\delta_{n}}\, : \, d_K(h_*\mu_y, h_*\mu_{y'})\leq \frac{\eta_n}{2}}   \leq 3\delta_{n}.
\end{equation}

Yet, as $\eta_n = \eta(h_n,\epsilon_{n-1})$, we have by \eqref{eq:eta_eps}:

\begin{equation}\label{eq:em_hn}
\forall y \in \I_{\epsilon_{n-1}} \, : \,\Leb_\I\croset{y' \in \I \, : \, d_K({h_n}_*\mu_y,{h_n}_*\mu_{y'}) \leq \eta_n} \leq 3\exp(-\eta_n^{-2+\epsilon_{n-1}}) =  3\delta_{n}.
\end{equation}
Therefore, to prove \cref{eq:mes_leb2} with \cref{eq:em_hn}, we want to prove the following inclusion for $y \in \I_{\epsilon_{n-1}}$:
\begin{equation}\label{eq:incl_em}
\croset{y' \in \I_{\epsilon_n\cdot\delta_{n}}\, : \, d_K(h_*\mu_y, h_*\mu_{y'})\leq \frac{\eta_n}{2}} \subset \croset{y' \in \I \, : \, d_K({h_n}_*\mu_y,{h_n}_*\mu_{y'}) \leq \eta_n}.
\end{equation}
Then, let $n\in \N^*$ and $y$ be an element of $\I_{\epsilon_{n-1}}$.  For $y' \in \I_{\epsilon_n\cdot\delta_{n}}$ such that $d_K(h_*\mu_y, h_*\mu_{y'}) \leq \frac{\eta_n}{2}$, let us show that $d_K({h_{n}}_*\mu_y, {h_n}_*\mu_{y'}) \leq \eta_n$. First, we have by triangular inequality and $C^0$-convergence of $(h_m)_m$ to $h$ on $\check{\M}$:
$$d_K({h_{n}}_*\mu_y, {h_n}_*\mu_{y'}) \leq  \sum_{m\geq n}\left (d_K({h_m}_*\mu_y,{h_{m+1}}_*\mu_y) + d_K({h_m}_*\mu_{y'},{h_{m+1}}_*\mu_{y'})\right ) + d_K(h_*\mu_y, h_*\mu_{y'}).$$
Then, as $(\epsilon_m\cdot\delta_m)_m$ is decreasing and $\mu_y$ and $\mu_{y'}$ are supported in $\M_{\epsilon_n\cdot\delta_n}$, they are supported in $\M_{\epsilon_m\cdot\delta_m}$ for every $m\geq n$. Therefore we obtain by \cref{prop:dk_c0} and the latter equation:

$$d_K({h_{n}}_*\mu_y, {h_n}_*\mu_{y'}) \leq 2\sum_{m\geq n}\left (\sup_{z \in \M_{\epsilon_m\cdot\delta_{m}}} d(h_{m+1}(z),h_m(z))\right ) + d_K(h_*\mu_y, h_*\mu_{y'}).$$
Then, we obtain by hypothesis on $d_K(h_*\mu_y, h_*\mu_{y'})$, \cref{fact:hn}, and \cref{fact:seq_eta} that:
$$d_K({h_{n}}_*\mu_y, {h_n}_*\mu_{y'}) \leq 2\sum_{m\geq n} \epsilon_m\cdot\eta_m + \frac{\eta_n}{2} \leq 2\eta_n\sum_{m\geq n} \epsilon_m+ \frac{\eta_n}{2}.$$
Finally, as by I) we have $\sum_m\epsilon_m \leq 1/4$, it follows:
$$d_K({h_{n}}_*\mu_y, {h_n}_*\mu_{y'}) \leq \eta_n.$$
Therefore, we obtain the inclusion \eqref{eq:incl_em}. Then by \eqref{eq:incl_em} and \eqref{eq:em_hn} we obtain \eqref{eq:mes_leb2}. Hence we also obtain \eqref{eq:mes_leb1} which concludes the proof.

\end{proof}

\appendix

\section{Results on the Kantorovich distance}\label{an:kanto}
In this appendix, we present several results on the Kantorovich distance which are used in \cref{sec:erg,sec:emer}.\\
First, we have the following result from \cite[Prop. 1.3]{berger_analytic_2022} on the push-forward of measures by $C^0$ maps.
\begin{prop}\label{prop:dk_c0}
Let $X$ and $Y$ be measurable compact metric spaces. For $f_1,f_2 \in C^0(X,Y)$ and $\mu \in \mathcal{M}(X)$, the Kantorovich distance satisfies:
$$d_K({f_1}_*\mu,{f_2}_*\mu) \leq d_{C^0}(f_1,f_2).$$
\end{prop}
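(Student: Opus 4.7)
The plan is to use either formulation of the Kantorovich distance recalled in the Notation section; the primal (coupling) formulation gives the cleanest argument. I would consider the transport plan obtained by pushing $\mu$ forward along the map $(f_1,f_2): X \to Y\times Y$, and check that it is admissible and that its transport cost is controlled by $d_{C^0}(f_1,f_2)$.

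More precisely, first I would define $\nu := (f_1,f_2)_*\mu \in \mathcal{M}(Y\times Y)$. The two coordinate projections $p_i: Y\times Y \to Y$ satisfy $p_i \circ (f_1,f_2) = f_i$, so ${p_i}_*\nu = {f_i}_*\mu$ for $i \in \{1,2\}$. Thus $\nu$ is an admissible coupling between ${f_1}_*\mu$ and ${f_2}_*\mu$, and therefore $\nu$ can be used in the infimum defining $d_K$.

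Next, I would estimate the transport cost. By the change-of-variables formula,
$$\int_{Y\times Y} d(y_1,y_2)\, d\nu(y_1,y_2) = \int_X d(f_1(x), f_2(x))\, d\mu(x) \leq \sup_{x\in X} d(f_1(x),f_2(x))\cdot \mu(X) = d_{C^0}(f_1,f_2),$$
since $\mu$ is a probability measure and $d_{C^0}(f_1,f_2) = \sup_{x\in X} d(f_1(x),f_2(x))$. Taking the infimum over all admissible couplings, which is bounded above by this particular choice of $\nu$, yields the claimed inequality.

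There is no serious obstacle here; the only subtle point is to verify that $(f_1,f_2)$ is measurable (immediate since both $f_i$ are continuous hence Borel) so that the pushforward $\nu$ is well defined on the Borel $\sigma$-algebra of $Y\times Y$, and that $\mu$ being a probability measure lets one bound the integral of $d(f_1,f_2)$ by its supremum. Alternatively, one could give a one-line dual proof: for any $1$-Lipschitz $g:Y\to\R$, $\int_Y g\, d({f_1}_*\mu - {f_2}_*\mu) = \int_X (g\circ f_1 - g\circ f_2)\, d\mu \leq d_{C^0}(f_1,f_2)$, then invoke the Kantorovich–Rubinstein duality recalled in the Notation section.
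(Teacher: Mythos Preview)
Your proof is correct; both the primal argument via the coupling $(f_1,f_2)_*\mu$ and the dual argument via Kantorovich--Rubinstein are valid and standard. Note that the paper does not actually prove this proposition but merely cites it from \cite[Prop.~1.3]{berger_analytic_2022}, so there is no in-paper proof to compare against; your argument would serve perfectly well as a self-contained proof.
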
 

Next, let us present a result on the Kantorovich distance for bi-Lipschitz maps which is used several times in \cref{sec:emer}.

\begin{prop}\label{prop:kanto}
Let $(X,d_X)$ and $(Y,d_Y)$ be measurable metric spaces and let $\phi : X \rightarrow Y$ be an inversible $Q$-bi-Lipschitz map. i.e. $\phi$ and $\phi^{-1}$ are $Q$-Lipschitz. Then for $\mu_1$ and $\mu_2$ two measures on $X$ we have:
$$\frac{1}{Q} d_K(\mu_1,\mu_2) \leq d_K(\phi_*\mu_1,\phi_*\mu_2) \leq Q d_K(\mu_1,\mu_2).$$
\end{prop}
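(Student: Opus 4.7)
The plan is to prove both inequalities by a single symmetric argument: one inequality gives the upper bound and applying the same reasoning to $\phi^{-1}$ gives the lower bound. I will work from the primal (coupling) formulation of $d_K$, since pushing couplings forward under a Lipschitz map interacts cleanly with the cost $\int d\,d\mu$.

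For the upper bound, I would start from any coupling $\mu \in \mathcal{M}(X^2)$ with marginals $\mu_1, \mu_2$. Consider the pushforward $\widetilde{\mu} := (\phi \times \phi)_* \mu \in \mathcal{M}(Y^2)$. A direct check on the projections $p_i$ shows that the marginals of $\widetilde{\mu}$ are $\phi_*\mu_1$ and $\phi_*\mu_2$, so $\widetilde\mu$ is an admissible coupling between $\phi_*\mu_1$ and $\phi_*\mu_2$. Using the $Q$-Lipschitz property of $\phi$ and the change of variables formula,
\begin{equation*}
\int_{Y^2} d_Y(y_1,y_2)\,d\widetilde\mu(y_1,y_2) = \int_{X^2} d_Y(\phi(x_1),\phi(x_2))\,d\mu(x_1,x_2) \leq Q \int_{X^2} d_X(x_1,x_2)\,d\mu(x_1,x_2).
\end{equation*}
Taking the infimum over couplings $\mu$ on the right gives $d_K(\phi_*\mu_1,\phi_*\mu_2) \leq Q\, d_K(\mu_1,\mu_2)$.

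For the lower bound, I would apply the upper bound just obtained to the inverse map $\phi^{-1}: Y \to X$, which is also $Q$-Lipschitz. Since $\phi^{-1}_*(\phi_*\mu_i) = \mu_i$ for $i=1,2$, the upper bound applied to $\phi^{-1}$ yields
\begin{equation*}
d_K(\mu_1,\mu_2) = d_K(\phi^{-1}_*\phi_*\mu_1, \phi^{-1}_*\phi_*\mu_2) \leq Q\, d_K(\phi_*\mu_1, \phi_*\mu_2),
\end{equation*}
which rearranges to $\frac{1}{Q} d_K(\mu_1,\mu_2) \leq d_K(\phi_*\mu_1,\phi_*\mu_2)$.

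There is no real obstacle here: the statement is essentially a functorial property of $d_K$ with respect to Lipschitz pushforwards, and the symmetric nature of the bi-Lipschitz hypothesis reduces the lower bound to the upper bound by a clean inverse-map argument. The only point worth a sanity check is the measurability of $\phi \times \phi$, which is automatic from the continuity (hence Borel measurability) of $\phi$ on metric spaces, and the fact that pushforward measures are well-defined and behave well under composition.
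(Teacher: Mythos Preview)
Your proof is correct. It differs from the paper's argument in that you work on the primal side of the Kantorovich distance, pushing couplings forward under $\phi\times\phi$ and using the Lipschitz bound on the cost integrand, whereas the paper works on the dual (Kantorovich--Rubinstein) side: for a $1$-Lipschitz $f:Y\to\R$ it observes that $\tfrac{1}{Q}f\circ\phi$ is $1$-Lipschitz on $X$, so $\int f\,d(\phi_*\mu_1-\phi_*\mu_2)=Q\int \tfrac{1}{Q}f\circ\phi\,d(\mu_1-\mu_2)\le Q\,d_K(\mu_1,\mu_2)$, and then takes the supremum over $f$. Both approaches reduce the lower bound to the upper bound by applying the same estimate to $\phi^{-1}$. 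Your route has the advantage of not requiring the duality theorem (and so works even in settings where duality might fail or need extra hypotheses), while the paper's route is slightly shorter once the dual formulation is taken as given.
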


\begin{proof}
To obtain the right-hand inequality, we consider $f:Y \rightarrow \R$ a $1$-Lipschitz map, then $\tfrac{1}{Q}f\circ \phi$ is also $1$-Lipschitz and we have by Kantorovich's theorem :
$$\int_Yfd(\phi_*\mu_1-\phi_*\mu_2) = \int_X f\circ \phi d(\mu_1-\mu_2) = Q\int_X\frac{1}{Q}f\circ \phi d(\mu_1-\mu_2) \leq Q d_K(\mu_1,\mu_2).$$
Which gives the right-hand inequality by Kantorovich's theorem. Then, we obtain the left-hand inequality with the right one by replacing $\phi$ by $\phi^{-1}$ and $\mu_i$ by $\phi_*\mu_i$ for $i\in \croset{1,2}$.
\end{proof}

Then for $\M \in \croset{\A , \Di, \Sp}$, we obtain as a corollary of these two proposition and the density of $\symp{1}{\M}$ in $\symp{0}{\M}$ that, for $h \in \symp{0}{\M}$, the map $\mu \in \mathcal{M}(\M) \mapsto h_\star \mu$ is uniformly continuous.

\begin{coro}\label{coro:kanto}
Let $h$ be in $\symp{0}{\M}$, then the map
$$\mu \in \mathcal{M}(\M) \mapsto h_*\mu \in \mathcal{M}(\M)$$
is uniformly continuous.
\end{coro}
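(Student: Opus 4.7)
The plan is to combine the two preceding propositions with the density of $\symp{1}{\M}$ in $\symp{0}{\M}$ for the $C^0$-topology, which is built into the very definition of $\symp{0}{\M}$ recalled in the Notation section. First, I would fix $\epsilon > 0$ and approximate $h$ by some $g \in \symp{1}{\M}$ with $d_{C^0}(h, g) < \epsilon/3$. Since $g$ is a $C^1$-diffeomorphism of the compact surface $\M$, its differential and that of its inverse are bounded, so $g$ is $Q$-bi-Lipschitz for some finite $Q = Q(g) > 0$.

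Next, for arbitrary $\mu_1, \mu_2 \in \mathcal{M}(\M)$, I would split by the triangle inequality
$$d_K(h_* \mu_1, h_* \mu_2) \leq d_K(h_* \mu_1, g_* \mu_1) + d_K(g_* \mu_1, g_* \mu_2) + d_K(g_* \mu_2, h_* \mu_2).$$
\cref{prop:dk_c0} bounds the first and third terms by $d_{C^0}(h,g) < \epsilon/3$, while \cref{prop:kanto} bounds the middle term by $Q \cdot d_K(\mu_1,\mu_2)$. Setting $\delta := \epsilon/(3Q)$, any $\mu_1, \mu_2$ with $d_K(\mu_1,\mu_2) < \delta$ would then satisfy $d_K(h_* \mu_1, h_* \mu_2) < \epsilon$, which is precisely uniform continuity since $\delta$ depends only on $\epsilon$ and on $h$, not on the measures.

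There is no genuine obstacle: once $g$ is fixed, $Q$ is a constant independent of the measures. The only mild subtlety to flag is that the bi-Lipschitz constant $Q(g)$ may blow up as $g$ approaches $h$, since $h$ is only a homeomorphism in general; but this is harmless because we select a single approximation $g$ for each prescribed $\epsilon$ and never let $g$ vary with $\mu_1, \mu_2$.
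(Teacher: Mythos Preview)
Your proof is correct and follows exactly the approach the paper indicates: the paper does not spell out a proof for this corollary but simply presents it as a consequence of \cref{prop:dk_c0}, \cref{prop:kanto}, and the $C^0$-density of $\symp{1}{\M}$ in $\symp{0}{\M}$, which is precisely the combination you implement. Your observation that $Q(g)$ may blow up but is fixed once $g$ is chosen is the only subtlety worth noting, and you handle it correctly.
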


\bibliographystyle{alpha} 
\bibliography{biblio}
\addcontentsline{toc}{section}{\bfseries{Bibliography}}
\end{document}